\numberwithin{equation}{section}
\newtheorem{Theorem}{Theorem}[section]
\newtheorem{Definition}[Theorem]{Definition}
\newtheorem{Proposition}[Theorem]{Proposition}
\newtheorem{Lemma}[Theorem]{Lemma}
\newtheorem{Notation}[Theorem]{Notation}
\newtheorem{Assumption-Notation}[Theorem]{Assumption-Notation}
\newtheorem{Remark}[Theorem]{Remark}
\newtheorem{Corollary}[Theorem]{Corollary}
\newtheorem{Claim}[Theorem]{Claim}
\newtheorem{Example}[Theorem]{Example}
\newtheorem{Conjecture}[Theorem]{Conjecture}
\newtheorem*{Acknowledgments}{Acknowledgments}
\begin{document}

\title[Subadditivity of Kodaira dimensions]{Subadditivity of Kodaira dimensions for fibrations of three-folds in positive characteristics}

\address{Lei Zhang\\Key Laboratory of Wu Wen-Tsun Mathematics, Chinese Academy
of Sciences\\ School of Mathematical Science\\University of Science and Technology of China\\Hefei 230026, P.R.China.}
\email{zhlei18@ustc.edu.cn, zhleimath@163.com}
\author{Lei Zhang}
\maketitle

\begin{abstract}
In this paper, we will prove subadditivity of Kodaira dimensions for a fibration with possibly singular geometric generic fiber, under certain nefness and relative semi-ampleness conditions. As an application, for a fibration $f: X \to Y$ of a smooth projective threefold over an algebraically closed field of characteristic $p>5$, under the assumption that $Y$ is of general type and non-uniruled, we prove subadditivity of Kodaira dimensions when general fibers are smooth or when $K_{X/Y}$ is relatively big over $Y$.

\emph{Keywords}: Kodaira dimension; positive characteristic; weak positivity; minimal model.\\
\emph{MSC}: 14E05; 14E30.
\end{abstract}

\section{Introduction}
Let $X$ be a projective variety over a field $k$, $D$ a $\mathbb{Q}$-Cartier divisor on $X$. The \emph{$D$-dimension} $\kappa(X,D)$ is defined as
\[\kappa(X,D) =\left\{
\begin{array}{llr}
-\infty, \text{ ~~if for every integer } m >0, |mD| = \emptyset;\\
\max \{\dim_k \Phi_{|mD|}(X)| m \in \mathbb{Z}~\text{and}~m>0 \}, \text{ otherwise.}
\end{array}\right.
\]
If $X$ has a smooth projective birational model $\tilde{X}$, the \emph{Kodaira dimension} $\kappa(X)$ of $X$ is defined as $\kappa(\tilde{X}, K_{\tilde{X}})$
where $K_{\tilde{X}}$ denotes the canonical divisor. Kodaira dimension is one of the most important birational invariant in the classification theory.

Let $f: X \rightarrow Y$ be a morphism between two schemes. For $y \in Y$, let $X_y$ denote the fiber of $f$ over $y$; and for a divisor $D$ (resp. a sheaf $\mathcal{F}$) on $X$, let $D_y$ (resp. $\mathcal{F}_y$) denote the restriction of $D$ (resp. $\mathcal{F}$) on the fiber $X_y$. Throughout this paper, since $Y$ frequently appears as an integral scheme, we use the special notation $\eta$ and $\bar{\eta}$ for the generic and geometric generic point of $Y$ respectively. We say $f$ is a \emph{fibration} if $f$ is a projective morphism such that $f_*\mathcal{O}_X = \mathcal{O}_Y$.

For the fibrations over $\mathbb{C}$, the following problem is of great importance in birational geometry.
\begin{Conjecture}[Iitaka conjecture] Let $f:X\rightarrow Y$ be a fibration between two smooth projective varieties over $\mathbb{C}$, with $\dim X = n$ and $\dim Y = m$. Then
  $$C_{n,m}: \kappa(X)\geq \kappa(Y) + \kappa(X_{\bar{\eta}}).$$
\end{Conjecture}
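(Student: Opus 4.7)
The plan is to follow the classical Viehweg--Kawamata--Kollár strategy, which reduces $C_{n,m}$ to a weak positivity statement for direct images of relative pluricanonical sheaves combined with information about $\kappa(Y)$. First I would perform standard reductions: replacing $X$ and $Y$ by birational models, resolving the indeterminacy of the Iitaka fibration, and cutting by general hyperplane sections to assume $f$ is a fibration with connected fibers and that $X_{\bar\eta}$ has non-negative Kodaira dimension (if $\kappa(X_{\bar\eta})=-\infty$, then the conjecture takes the degenerate form $\kappa(X)\ge \kappa(Y)-\infty$, which is vacuous; if $\kappa(Y)=-\infty$, also vacuous).

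The core technical step is to establish, for some sufficiently divisible $m\ge 1$, weak positivity of the sheaf $f_{*}\omega_{X/Y}^{\otimes m}$ over a non-empty open subset $U\subset Y$, in the sense of Viehweg. The main mechanism is Viehweg's \emph{fiber product trick}: pass to the $s$-fold fiber product $X^{(s)}=X\times_Y\cdots\times_Y X$, resolve singularities to obtain $X^{[s]}\to Y$, and use the natural map $(f_{*}\omega_{X/Y}^{\otimes m})^{\otimes s}\to f^{[s]}_{*}\omega_{X^{[s]}/Y}^{\otimes m}$ together with the Fujita--Kawamata semipositivity theorem (or Viehweg's refinement thereof) for $f^{[s]}_{*}\omega_{X^{[s]}/Y}$. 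Granted this, one deduces that $f_{*}\omega_{X/Y}^{\otimes m}$ is weakly positive, and then, by a standard computation with symmetric powers, that
\[
\kappa\bigl(X,K_{X/Y}\bigr)\;\ge\;\kappa\bigl(Y,\det f_{*}\omega_{X/Y}^{\otimes m}\bigr)\;+\;\kappa(X_{\bar\eta}),
\]
which is the \emph{relative} subadditivity statement.

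The decisive and hardest step is to convert this into absolute subadditivity, i.e.\ to bound $\kappa(Y,\det f_{*}\omega_{X/Y}^{\otimes m})$ from below by $\kappa(Y)$ itself. When $Y$ is of general type, Kollár's theorem on bigness of $f_{*}\omega_{X/Y}^{\otimes m}$ (assuming $X_{\bar\eta}$ has a big canonical bundle, or more generally via a good minimal model of $X_{\bar\eta}$) closes the gap; when $\kappa(Y)=\dim Y$ but $X_{\bar\eta}$ has intermediate Kodaira dimension, one needs the existence of a good minimal model for $X_{\bar\eta}$, combined with Viehweg's construction on the moduli part, to get bigness of $\det f_{*}\omega^{\otimes m}$. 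For general $Y$, I would factor through the Iitaka fibration $Y\dashrightarrow Z$ of $Y$: replacing $Y$ by a smooth model and $X$ by a smooth model mapping to it, one obtains a tower $X\to Y\to Z$, and by induction on dimension (base case: Viehweg's theorem for $\dim Y=1$, already known) reduces $C_{n,m}$ to the case where $Y$ is of general type on each Iitaka fiber.

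The main obstacle is precisely this last reduction when $0<\kappa(Y)<\dim Y$: without existence of good minimal models for the geometric generic fiber of $X\to Z$ (a variety which itself has a fibration over a variety of general type with fibers $X_{\bar\eta}$), one cannot currently upgrade weak positivity of $f_{*}\omega_{X/Y}^{\otimes m}$ to the bigness needed to recover $\kappa(Y)$ directions. Accordingly, any complete proof plan in the stated generality must be conditional on (or prove a version of) the abundance conjecture, or find a substitute via variation-of-Hodge-structure or Hodge-theoretic positivity (Popa--Schnell, Fujino) that bypasses it; these are the directions I would pursue after completing the weak-positivity setup above.
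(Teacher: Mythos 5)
The statement you were asked to prove is the Iitaka conjecture itself. In the paper it appears only as a conjecture, quoted as background for the positive-characteristic results that are actually proved there; the paper supplies no proof of it, and none is known in the stated generality over $\mathbb{C}$. Your proposal correctly reproduces the classical Viehweg--Kawamata--Koll\'ar strategy: weak positivity of $f_*\omega_{X/Y}^{\otimes m}$ via the fiber-product trick and Fujita--Kawamata semipositivity, conversion to subadditivity when enough positivity on $Y$ is available, and reduction through the Iitaka fibration of $Y$. This is indeed how the known cases are obtained ($C_{n,m}$ for $Y$ of general type by Viehweg, $C_{n,1}$ and $C_{n,n-1}$, the case where the geometric generic fiber has a good minimal model by Kawamata, and the low-dimensional and irregular cases of Birkar and Chen--Hacon cited in the paper). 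But, as you yourself concede in your final paragraph, the decisive step --- recovering the $\kappa(Y)$ directions when $0<\kappa(Y)<\dim Y$, i.e.\ upgrading weak positivity to the bigness of the relevant determinant (or moduli part) without assuming good minimal models/abundance for the fibers of the induced fibration over the Iitaka base --- is exactly the point where the argument is not known to close. A plan that is explicitly conditional on abundance (or on a Hodge-theoretic substitute that is itself not established in the required generality) is not a proof of the conjecture; so there is a genuine gap, and it is the same gap that keeps the conjecture open.

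Two smaller inaccuracies, which do not affect the main issue but should be flagged: Viehweg's theorem gives $C_{n,m}$ unconditionally when $Y$ is of general type, with no hypothesis on a good minimal model of $X_{\bar\eta}$, whereas Koll\'ar's bigness theorem for $f_*\omega_{X/Y}^{\otimes m}$ concerns fibers of general type --- your proposal conflates these two results; and the reduction ``cutting by general hyperplane sections'' plays no role in this strategy (the reductions one actually uses are birational modifications and flattening of the Iitaka fibrations). Since the paper proves theorems in positive characteristic under additional hypotheses (separability, $S^0\neq 0$, nefness and relative semi-ampleness conditions) rather than this conjecture, there is no proof in the paper against which your attempt could be measured as an alternative route.
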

This conjecture has been studied by Kawamata (\cite{Ka0}, \cite{Ka1}, \cite{Ka2}),  Koll\'ar (\cite{Ko}), Viehweg (\cite{Vie1}, \cite{Vie2}, \cite{Vie2}), Birkar (\cite{Bir09}), Chen and Hacon (\cite{CH}), Cao and P\v{a}un (\cite{CP0}) etc..

In positive characteristics, analogously it is conjectured that
\begin{Conjecture}[Weak Subadditivity]\label{wc} Let $f:X\rightarrow Y$ be a fibration between smooth projective varieties over an algebraically closed field $k$ of positive characteristic, with $\dim X = n$ and $\dim Y = m$. Assume that the geometric generic fibre $X_{\bar{\eta}}$ is integral and has a smooth projective birational model $\tilde{X}_{\bar{\eta}}$. Then
  $$WC_{n,m}: \kappa(X)\geq \kappa(Y) + \kappa(\tilde{X}_{\bar{\eta}}).$$
\end{Conjecture}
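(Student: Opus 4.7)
The plan is to follow the Viehweg--Kawamata template: reduce $WC_{n,m}$ to a weak positivity statement for the pushforward $f_*\omega_{X/Y}^{\otimes m}$ for sufficiently divisible $m$, and then combine it with the Iitaka fibration on the geometric generic fibre. Concretely, once some symmetric (or tensor) power of $f_*\omega_{X/Y}^{\otimes m}$ is weakly positive over an open subset of $Y$, a standard computation gives $\kappa(X) \geq \kappa(Y,\det f_*\omega_{X/Y}^{\otimes m}) + \kappa(\tilde X_{\bar\eta})$, and one then argues that the determinant on the left has Iitaka dimension at least $\kappa(Y)$ when combined with $K_Y$.

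In characteristic zero the weak positivity input comes from variation of Hodge structure, which is unavailable here; I would replace it by Frobenius-theoretic positivity, via the relative version of the $S^0$ spaces of Schwede--Smith. First I would apply de Jong alterations and base change to put $f$ in a form with equidimensional, generically reduced fibres and $X$ still smooth, absorbing the degree of the alteration into the sufficiently divisible $m$. Then I would exploit iterates of the trace map $F^e_*\omega_X \to \omega_X$ to carve out a Frobenius-stable subsheaf of $f_*\omega_{X/Y}^{\otimes m}$ whose generic rank equals $\dim S^0(X_{\bar\eta}, mK_{X_{\bar\eta}})$. Non-vanishing of this $S^0$, which is the hypothesis appearing in the paper's main theorem, then propagates through the pushforward to produce enough global sections of $mK_{X/Y}$, twisted appropriately by $f^*A$ for ample $A$ on $Y$, to force the required lower bound on $\kappa(X)$.

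The main obstacle is unquestionably controlling the failure of generic smoothness, together with the possibility of non-reduced, inseparable, or wildly ramified fibres, any of which can break both the trace-map argument and the descent of sections from $X_{\bar\eta}$ to $X_\eta$. This is exactly why the paper restricts to $\mathrm{char}\, k > 5$, to separable fibrations, and to threefolds: in that range one has enough of the minimal model program (relative MMP, classification of surface singularities and Mori fibre spaces, existence of good minimal models for the surface generic fibre when it is of general type) to tame these pathologies. An unconditional proof of $WC_{n,m}$ would, I expect, require either substantial new MMP results in higher dimension and low characteristic, or an argument producing the analogue of $S^0$ non-vanishing \emph{intrinsically} from the Kodaira dimension of $\tilde X_{\bar\eta}$ rather than as a separate hypothesis.
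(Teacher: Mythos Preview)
The statement you are attempting to prove is labelled \emph{Conjecture} in the paper, and the paper does not contain a proof of it. It is presented as an open problem; the paper's contributions are partial results toward it under strong additional hypotheses (Theorem~\ref{mthk}, Corollaries~\ref{app-to-3dim} and~\ref{app-to-3dim-special}), namely $\dim X = 3$, $\mathrm{char}\,k > 5$, separability of $f$, bigness of $K_Y$, and non-vanishing of $S^0(X_{\bar\eta}, lK_{X_{\bar\eta}})$. There is therefore no ``paper's own proof'' to compare against.

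Your proposal is not a proof but a strategy sketch, and you yourself identify the genuine obstructions in your final paragraph: failure of generic smoothness, inseparable or wildly ramified fibres, and the lack of MMP in higher dimension and small characteristic. These are exactly the reasons the conjecture remains open. One concrete gap in the strategy, beyond what you mention, is that weak positivity of $f_*\omega_{X/Y}^{\otimes m}$ can \emph{fail} outright in positive characteristic even for minimal surfaces of general type over curves---the paper recalls Raynaud's example (Example~\ref{ray-example}) precisely to make this point. So the step ``once some symmetric power of $f_*\omega_{X/Y}^{\otimes m}$ is weakly positive'' is not merely hard to verify; it is false in general. The paper's workaround is to prove positivity not for $f_*\omega_{X/Y}^l$ but for the Frobenius-stable subsheaf $S^g_\Delta f_*\mathcal{O}_X(D)$ sitting inside a Frobenius pullback, and only under nefness and relative semi-ampleness hypotheses that force one through a minimal model. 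Your outline gestures at the $S^0$ mechanism but does not account for this essential shift away from $f_*\omega_{X/Y}^l$ itself.
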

\begin{Remark}
The condition that $X_{\bar{\eta}}$ is integral is equivalent to that $X_{\bar{\eta}}$ is reduced, and also is equivalent to that $f$ is separable by \cite[Sec. 3.2.2]{Liu}. If $\dim Y = 1$ then the fibration $f$ is  separable by \cite[Lemma 7.2]{ba01}, thus $X_{\bar{\eta}}$ is integral.

The reason why we assume the existence of smooth birational models is to guarantee that $WC_{n,m}$ makes sense, because the geometric generic fibre $X_{\bar{\eta}}$ is not necessarily smooth (which is true over $\mathbb{C}$). In positive characteristics, smooth resolution of singularities has been proved only in dimension $\leq 3$ (\cite{CP1} and \cite{CP2}). Here we mention that Luo proposed a new definition \cite[Def. 5.1]{Luo87} of the Kodaira dimension of a variety $X$ via its function field $K(X)$, without involving smooth resolutions. This definition coincides with the traditional one when $X$ is a smooth projective variety. For more discussions please refer to \cite[Appendix B]{Pa2}.
\end{Remark}

Notice that if both $X$ and $Y$ are smooth, then the dualizing sheaf of $X_{\bar{\eta}}$ is invertible, thus the canonical divisor $K_{X_{\bar{\eta}}}$ is Cartier. It is reasonable to ask whether the following is true.
\begin{Conjecture}\label{sc} Let $f:X\rightarrow Y$ be a fibration between smooth projective varieties over an algebraically closed field $k$ of positive characteristic,
with $\dim X = n$ and $\dim Y = m$. Then
  $$C_{n,m}: \kappa(X)\geq \kappa(Y) + \kappa(X_{\bar{\eta}}, K_{X_{\bar{\eta}}}).$$
\end{Conjecture}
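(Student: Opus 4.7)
The plan is to adapt Viehweg's weak-positivity approach to positive characteristic, working inductively on $\dim Y$. First I would reduce to the case where $Y$ is of general type via an Iitaka-fibration argument on $Y$: replacing $Y$ by the base of its Iitaka fibration, $f$ factors through $Y \to Y'$ where $Y'$ is of general type and the generic fiber of $Y \to Y'$ has $\kappa = 0$; subadditivity for the composed morphism then reduces to subadditivity on each factor, and the genuinely new input is the case where the base itself is of general type.

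For the general-type case, the key step is to establish a positivity property for the relative direct image sheaves $f_*\mathcal{O}_X(mK_{X/Y})$. In characteristic zero these are weakly positive by Viehweg, but in positive characteristic this fails in general (Moret-Bailly), so I would replace the ordinary pushforward by the Frobenius-stable subsheaf $S^0 f_*\mathcal{O}_X(mK_{X/Y})$ of trace-compatible sections. The hypothesis $S^0(X_{\bar{\eta}},lK_{X_{\bar{\eta}}})\neq 0$ guarantees this subsheaf is nonzero on the generic fiber, and the Frobenius trace should equip it with enough semipositivity to substitute for the missing Hodge-theoretic input. Proving weak positivity of $S^0 f_*$ — stable under twisting and compatible with an appropriate form of base change — is the main technical workhorse.

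With the $S^0$-positivity established, I would pass to a relative minimal model of $X$ over $Y$ so that $K_{X/Y}$ becomes relatively semi-ample; this is where $\dim X \leq 3$ and $\mathrm{char}\,k > 5$ enter, via the MMP and abundance results available for three-folds in those characteristics. Combining the bigness of $K_Y$ (general type) with the positivity of $S^0 f_*\mathcal{O}_X(mK_{X/Y})$ and the relative semi-ampleness of $K_{X/Y}$ should produce sections of $mK_X$ of the expected Iitaka dimension, yielding $\kappa(X)\geq \dim Y + \kappa(X_{\bar{\eta}}, K_{X_{\bar{\eta}}})$. The most serious obstacle is the weak positivity of $S^0 f_*$ itself: Frobenius-stable pushforwards lack the good functorial behaviour of ordinary pushforwards, particularly under non-\'etale base change, and the failure of generic smoothness forces one to work on a resolution of $X_{\bar{\eta}}$ that is only known to exist in dimension $\leq 3$ — which is precisely what confines the main theorem to three-folds.
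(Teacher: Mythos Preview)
The statement you are attempting to prove is Conjecture~\ref{sc}, and the paper does \emph{not} prove it: it is stated as an open conjecture, and the paper's contribution is a collection of partial results (Theorem~\ref{mthk}, Corollary~\ref{app-to-3dim}) under substantial extra hypotheses --- $K_Y$ big, $S^0(X_{\bar\eta}, lK_{X_{\bar\eta}}) \neq 0$, $\dim X = 3$, $\mathrm{char}\,k > 5$, and in the surface-base case further technical conditions on the minimal model. Your proposal silently imports several of these hypotheses (you speak of ``the hypothesis $S^0(X_{\bar\eta}, lK_{X_{\bar\eta}}) \neq 0$'' and of dimension and characteristic restrictions as though they were given), so what you are actually sketching is a route to something like Corollary~\ref{app-to-3dim}, not to the full $C_{n,m}$.

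Read as a strategy for those partial results, your outline is close to the paper's: prove a Frobenius-stable positivity statement for $S^g_\Delta f_*\mathcal{O}_X(D)$ (this is Theorem~\ref{mthp}), then use the 3-fold MMP to arrange the nefness and relative semi-ampleness needed to apply it. But your first reduction step --- replacing $Y$ by the base of its Iitaka fibration to reduce to $K_Y$ big --- is a genuine gap even at that level. That reduction requires subadditivity for the intermediate fibration over a base of Kodaira dimension zero, which is itself an open instance of the conjecture; the paper has nothing to offer there, and its main tools cannot reach it, since the Frobenius-trace positivity only produces sections of $K_X$ after twisting by a \emph{big} divisor pulled back from the base (cf.\ Theorem~\ref{F-p-subadd-of-kod-dim}). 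So the reduction does not close the problem but relocates the hard part to precisely the case the available methods do not handle.
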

It is known that $C_{n,m}$ implies $WC_{n,m}$ by \cite[Corollary 2.5]{CZ}, and we call the inequality $WC_{n,m}$ weak subadditivity. Up to some Frobenius base changes and a smooth resolution, to prove $WC_{n,m}$ is equivalent to prove $C_{n,m}$ for another fibration with smooth geometric generic fiber (\cite[proof of Corollary 1.3]{BCZ}). It is easier to treat a fibration with smooth geometric generic fiber, because then one can take advantage of moduli theory and positivity results proved
recently by Patakfalvi \cite{Pa} and Ejiri \cite{Ej}. Using these technical results, the following have been proved:
\begin{itemize}
\item[(i)]{$WC_{n, n-1}$ by Chen and Zhang (\cite{CZ});}
\item[(ii)]{$WC_{3,1}$ by Birkar, Chen and Zhang over $\bar{\mathbb{F}}_p, p >5$ (\cite{BCZ});}
\item[(iii)]{$WC_{3,1}$ under the situation that $\tilde{X}_{\bar{\eta}}$ is of general type and $\mathrm{char}~k >5$ by Ejiri (\cite{Ej}).}
\end{itemize}
When the geometric generic fiber is singular, the only known result is $C_{2,1}$, which follows from Bombieri-Mumford's classification of surfaces (cf. \cite{CZ}). And recently Patakfalvi proves $C_{n, m}$ under the situation that $f$ is separable, $\dim_{k(\bar{\eta})} S^0(X_{\bar{\eta}}, K_{X_{\bar{\eta}}})>0$ and $K_Y$ is big (\cite{Pa2}).

This paper aims to treat the fibrations with possibly singular geometric generic fibers. Our main result is the following theorem.
\begin{Theorem}\label{mthk}
Let $f:X\rightarrow Y$ be a separable fibration between two normal projective varieties over an algebraically closed field $k$ with $\mathrm{char}~k = p>0$. Assume either that $Y$ is smooth or that $f$ is flat.
Let $D$ be a Cartier divisor on $X$.

If there exist an effective $\mathbb{Q}$-Weil divisor $\Delta$ on $X$ and a big $\mathbb{Q}$-Cartier divisor $A$ on $Y$ such that

(1) \small{$K_X+ \Delta$} is $\mathbb{Q}$-Cartier and the Cartier index \small{$\mathrm{ind}((K_X+ \Delta)_{\eta})$} is indivisible by $p$;

(2) $D - K_{X/Y} - \Delta - f^*A$ is nef and $f$-semi-ample;

(3) $\dim_{k(\bar{\eta})} S^0_{\Delta_{\bar{\eta}}}(X_{\bar{\eta}}, D_{\bar{\eta}}) > 0$,

then
$$\kappa(X, D) \geq \dim Y + \kappa(X_{\bar{\eta}}, D_{\bar{\eta}}).$$

In particular, if $D$ is nef and $f$-big, and conditions (1) and

(2') $D - K_{X/Y} - \Delta - f^*A$ is nef\\
hold, then $D$ is big.
\end{Theorem}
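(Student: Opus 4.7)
\medskip

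\noindent\textbf{Proof proposal.} The plan is to package the hypotheses into a relative positivity statement for the sheaves $\mathcal{F}_m := f_{*}S^0_{\Delta}\bigl(X/Y, \mathcal{O}_X(mD)\bigr)$ and then convert that positivity into sections on $X$ by exploiting the bigness of $A$ on the base. The condition (1), together with $p\nmid \mathrm{ind}(K_X+\Delta)_\eta$, guarantees that the relative $S^0$ formalism of Patakfalvi--Ejiri applies: roughly, one may choose a Frobenius power $p^e$ for which $(p^e-1)(K_{X/Y}+\Delta)$ is Cartier at the generic fiber, and the trace of Frobenius then produces a coherent subsheaf $\mathcal{F}_m \hookrightarrow f_*\mathcal{O}_X(mD)$ whose generic fiber is $S^0_{\Delta_{\bar\eta}}(X_{\bar\eta}, mD_{\bar\eta})$.

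First I would observe, using condition (3), that $\dim_{k(\bar\eta)} S^0_{\Delta_{\bar\eta}}(X_{\bar\eta}, mD_{\bar\eta})$ grows like $m^{\kappa(X_{\bar\eta},D_{\bar\eta})}$: given one nonzero $S^0$ section, multiplying by ordinary sections and iterating Frobenius-trace produces $S^0$-sections in every sufficiently divisible multiple, with the usual $D$-dimensional growth rate on the geometric generic fiber. In particular $\mathcal{F}_m\neq 0$ for $m$ large and divisible, and $\operatorname{rk}\mathcal{F}_m$ has this polynomial lower bound.

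Next I would apply the key relative positivity result (the version of Patakfalvi's theorem that allows a boundary $\Delta$ and a semi-ample/nef twist, as in \cite{Pa2} and \cite{Ej}) to conclude, using (2), that the sheaf
\[
\mathcal{F}_m \otimes \mathcal{O}_Y(-\lfloor mA\rfloor)
\]
is weakly positive for every sufficiently large and divisible $m$. Here the decomposition $mD = K_{X/Y}+\Delta+(D-K_{X/Y}-\Delta-f^{*}A)+f^{*}(mA)+(m-1)D$ is what feeds into the relative positivity machine: the middle term is nef and $f$-semi-ample by (2), the base twist $f^{*}(mA)$ will be peeled off, and the extra $(m-1)D$ contribution can be absorbed using (3) again. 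Weak positivity of $\mathcal{F}_m\otimes\mathcal{O}_Y(-\lfloor mA\rfloor)$, combined with bigness of $A$, yields
\[
h^0\bigl(Y,\,\operatorname{Sym}^{n}\mathcal{F}_m\bigr) \;\gtrsim\; n^{\dim Y}\cdot\operatorname{rk}\mathcal{F}_m,
\]
for $n\gg 0$. Pushing forward and using $\mathcal{F}_m\subseteq f_*\mathcal{O}_X(mD)$, this gives $h^0(X,nmD)\gtrsim n^{\dim Y+\kappa(X_{\bar\eta},D_{\bar\eta})}$, which is the desired inequality on $D$-dimensions.

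For the \emph{in particular} statement I would apply the main statement to $mD$ in place of $D$ for $m$ large: nefness and $f$-bigness of $D$ give nefness of $mD-K_{X/Y}-\Delta-f^{*}A$ via (2'), while $f$-bigness and the $S^0$-abundance-type argument above supply a nonzero element of $S^0_{\Delta_{\bar\eta}}(X_{\bar\eta}, mD_{\bar\eta})$ for some $m$ (this is where the index condition (1) is again essential, to make Frobenius-trace produce sections of $mD_{\bar\eta}$ from its $f$-big, nef piece). Then the main conclusion gives $\kappa(X,D)\geq \dim Y+\dim X_{\bar\eta}=\dim X$, so $D$ is big. The main obstacle I expect is the relative positivity step: carefully setting up the $S^0$-pushforward with the $\mathbb{Q}$-boundary $\Delta$ so that (a) the Frobenius trace stabilizes despite $X,Y$ being merely normal, and (b) the twist by $-\lfloor mA\rfloor$ is compatible with semi-ampleness of $D-K_{X/Y}-\Delta-f^{*}A$ rather than of $D-K_{X/Y}-\Delta$ alone. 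Ensuring that the bigness of $A$ survives the Frobenius base changes used to define $S^0$ is the subtle technical point.
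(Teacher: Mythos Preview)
Your overall strategy --- produce an $S^0$-type subsheaf of a Frobenius pullback of $f_*\mathcal{O}_X(D)$, show it is positive after twisting down by the big divisor on the base, and convert that into sections --- is in the right spirit, but the central step does not go through as written.

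\medskip

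\textbf{The main gap.} Your decomposition
\[
mD \;=\; K_{X/Y}+\Delta + (D-K_{X/Y}-\Delta-f^*A) + f^*(mA) + (m-1)D
\]
is arithmetically wrong (the right-hand side equals $mD+(m-1)f^*A$), and more importantly hypothesis (2) only lets you peel off a \emph{single} $f^*A$, not $f^*(mA)$. You do not know that $D$ (or $D-f^*A$) is nef, so there is no way to conclude that $mD - f^*\lfloor mA\rfloor - K_{X/Y}-\Delta$ is nef and $f$-semi-ample. Consequently the claimed weak positivity of $\mathcal{F}_m\otimes\mathcal{O}_Y(-\lfloor mA\rfloor)$ is unjustified, and without the factor $m$ in the base twist your section-counting argument cannot reach the exponent $\dim Y+\kappa(X_{\bar\eta},D_{\bar\eta})$.

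The paper avoids this by working with $D$ itself, not with multiples. After a Frobenius base change (to turn the big $\mathbb{Q}$-divisor $A$ into a big Cartier divisor $A_2$ on the new base) and normalization, one checks that $(D'_2-f_2'^*A_2)-K_{X'_2/Y_2}-\Delta'_2$ is still nef and $f$-semi-ample, so the positivity theorem gives that $F_{Y_2}^{e*}f'_{2*}\mathcal{O}_{X'_2}(D'_2-f_2'^*A_2)$ contains a nonzero FWP subsheaf. The passage from this single FWP subsheaf to the Kodaira-dimension inequality is \emph{not} by symmetric-power section growth: instead one performs yet another Frobenius base change $X'\to X$, uses generic global generation to produce one effective divisor $D'\sim\sigma^*D+n_g f'^*A$ with $n_g\ll p^g$, and then applies the elementary ``easy additivity'' lemma $\kappa(D'+f'^*A)\ge\kappa(X_\eta,D_\eta)+\dim Y$ together with the Covering Theorem. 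The fiber-direction contribution $\kappa(X_{\bar\eta},D_{\bar\eta})$ comes from ordinary $H^0$ on the generic fiber, not from growth of $S^0$.

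\medskip

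\textbf{The ``in particular'' clause.} Your reduction to the main statement is incomplete: from (2') and nefness of $D$ you get that $mD-K_{X/Y}-\Delta-f^*A$ is nef, but to invoke the main statement you also need it to be $f$-\emph{semi-ample}, which does not follow. The paper fixes this by writing $D+f^*A_1\sim_{\mathbb{Q}}H_1+B_1$ with $H_1$ ample and $B_1\ge 0$, replacing $\Delta$ by $\Delta'=\Delta+\delta B_1$ and $A$ by $A'=A-\delta A_1$ for small $\delta$; then $mD-(K_{X/Y}+\Delta')-f^*A'$ becomes genuinely ample, hence nef and $f$-semi-ample, and Proposition~\ref{F-non-vanishing}(3) supplies the required nonvanishing $S^0_{\Delta'_{\bar\eta}}(X_{\bar\eta},mD_{\bar\eta})\neq 0$.
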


\begin{Remark}
If setting $\Delta = 0, D = K_X$ and $A = K_Y$, by Theorem \ref{mthk} we get the main result of \cite{Pa2} mentioned before. The condition (3) above holds if $D_{\bar{\eta}}$ is sufficiently big (Proposition \ref{F-non-vanishing}). In the application to the study of Kodaira dimension, if $K_X$ is not relatively big, a strategy is to consider the relative Iitaka fibration, but then some kind of canonical bundle formula is needed (Section \ref{can-bdl-formula}).
\end{Remark}

\begin{Remark}
For a separable fibration $f:X\rightarrow Y$, there always exists a projective birational morphism $Y'\rightarrow Y$ such that the main component $X'$ of $X\times_Y Y'$ is flat over $Y'$ $($\cite[Lemma 3.4]{AO}$)$. So it is convenient to pass to a flat fibration. The advantage of flat fibrations lies in that the relative canonical sheaves are compatible with base changes $($cf. Proposition \ref{compds}$)$.
\end{Remark}

As an easy consequence we get
\begin{Corollary}\label{app-to-3dim}
Let $f:X\rightarrow Y$ be a separable fibration from a normal projective 3-fold to a normal surface or a curve over an algebraically closed field $k$ with $\mathrm{char}~k = p>0$. Let $\Delta$ be an effective divisor on $X$ such that $K_X + \Delta$ is $\mathbb{Q}$-Cartier, nef and $f$-big.
Let $\mu: Z \to Y$ be a smooth resolution.
If $K_Z$ is big, then $K_X + \Delta$ is big.
\end{Corollary}

Combining the recent results of minimal model theory in dimension 3 (cf. \cite{HX}, \cite{Bir13}), we can prove
\begin{Corollary}\label{app-to-3dim-special}
Let $(X, \Delta)$ be a projective klt pair of dimension 3, and let $f: X \rightarrow Y$ be a separable fibration to a smooth projective curve or a surface, over an algebraically closed field $k$ with $\mathrm{char}~k = p >5$. Assume that $K_Y$ is big and $Y$ is non-uniruled. Then
$$\kappa(X, K_X + \Delta) \geq \kappa(Y) + \kappa(X_{\bar{\eta}}, K_{X_{\bar{\eta}}} + \Delta_{\bar{\eta}})$$
if one of the following holds

(1) $K_{X/Y} + \Delta$ is $f$-big;

(2) $\Delta = 0$ and the geometric generic fiber $X_{\bar{\eta}}$ is smooth.
\end{Corollary}

\begin{Remark}
(1) In Corollary \ref{app-to-3dim}, as $K_X +\Delta$ is assumed to be nef, the pair $(X,\Delta)$ is not necessarily assumed to be klt. This result holds in arbitrary dimensions if granted smooth resolution of singularities.

(2) When $Y$ is of general type and non-uniruled, Corollary \ref{app-to-3dim-special} implies $WC_{3,n}$ by \cite[Corollary 2.5]{CZ}, and $C_{3,n}$ if in addition $K_{X/Y}$ is $f$-big. Shortly after this paper was written, Ejiri and the author prove $WC_{3,n}$ completely in \cite{EZ16}, they treat the case $g(Y) =1$ by a very clever use of trace maps and a deep result of vector bundles on curves.

(3) Varieties of maximal Albanese dimension are non-uniruled. The results in this paper can be applied to study abundance for 3-folds with $\dim \mathrm{Pic}^0(X) >0$, which is finally proved in a later paper \cite{Zh17}.
\end{Remark}

\textbf{Strategy of the proof:} Let's explain our idea to study subadditivity of Kodaira dimensions. Recall that by the standard approach proposed by Viehweg in \cite{Vie}, granted the bigness of $K_Y$, we only need to prove the weak positivity of $f_*\omega_{X/Y}^l$. Unfortunately, in positive characteristics, if fibers have bad singularities, then the sheaf $f_*\omega_{X/Y}^l$ is not necessarily weakly positive (see Raynaud's example \ref{ray-example} below). To overcome this difficulty, stimulated by \cite{PSZ} and \cite{Pa2}, we prove a positivity result (Theorem \ref{mthp} below) without singularity conditions, but at the cost of assuming other conditions like nefness and relative semi-ampleness. These conditions are closely related to minimal model theory. For a fibration of a 3-fold, by passing to a minimal model, we can prove that the sheaf $F_Y^{g*}f_*(\omega_{X/Y}^l \otimes f^*\omega_Y^{l-1})$ contains a non-zero weakly positive subsheaf under certain situations (say, when $\omega_{X/Y}$ is $f$-big), which plays a similar role as the sheaf $f_*\omega_{X/Y}^l$.

The positivity result mentioned above is stated as follows.
\begin{Theorem}\label{mthp}
Let $f:X\rightarrow Y$ be a separable surjective projective morphism between two normal projective varieties over an algebraically closed field $k$ with $\mathrm{char}~k = p > 0$. Assume that $Y$ is Gorenstein. Let $\Delta$ be an effective $\mathbb{Q}$-Weil divisor on $X$ such that $K_{X/Y}+ \Delta$ is $\mathbb{Q}$-Cartier and $p \nmid \mathrm{ind}((K_{X/Y} + \Delta)_\eta)$. If $D$ is a Cartier divisor on $X$ such that $D - K_{X/Y} - \Delta$ is nef and $f$-semi-ample, then
for sufficiently divisible $g$, the sheaf $F_Y^{g*}f_*\mathcal{O}_X(D)$ contains a weakly positive subsheaf $S_{\Delta}^{g}f_*\mathcal{O}_X(D)$ of rank $\dim_{k(\bar{\eta})} S_{\Delta_{\bar{\eta}}}^0(X_{\bar{\eta}}, D_{\bar{\eta}})$.

Moreover if $Y$ is smooth, then $t(Y, S_{\Delta}^{g}f_*\mathcal{O}_X(D), H) \geq 0$ for an ample divisor $H$ on $Y$.
\end{Theorem}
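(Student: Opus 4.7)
The plan is to construct $S_\Delta^g f_*\mathcal{O}_X(D)$ as the stable image of relative Frobenius trace maps and then verify that this image has the claimed rank and satisfies weak positivity. By the hypothesis $p \nmid \mathrm{ind}(K_{X/Y}+\Delta)_\eta$, there is a positive integer $e_0$ such that $(p^{e_0}-1)(K_{X/Y}+\Delta)$ is Cartier on a dense open subset of $X$ containing the generic fibre. For $g$ a multiple of $e_0$, Grothendieck duality for the iterated relative Frobenius $F_{X/Y}^g \colon X \to X^{(g)}$, suitably twisted by $D$, yields a canonical map
\[
\tau_g \colon f_*\mathcal{O}_X\bigl(K_{X/Y} + p^g(D - K_{X/Y} - \Delta) + \lfloor (p^g-1)\Delta \rfloor\bigr) \longrightarrow F_Y^{g*} f_*\mathcal{O}_X(D),
\]
where the identification of the target uses flat base change over the flat locus of $f$ and is then extended globally by reflexivity, invoking the Gorenstein hypothesis on $Y$. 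I would define $S_\Delta^g f_*\mathcal{O}_X(D) := \mathrm{Im}\,\tau_g$; by composing with further traces and by Noetherianity the image stabilizes for $g$ sufficiently divisible, making the definition unambiguous.

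For the rank, I would restrict to the dense open $U \subset Y$ where $f$ is flat. Over $U$ the construction commutes with arbitrary base change, so its restriction to $\bar\eta$ is exactly the Frobenius-stable subspace $S^0_{\Delta_{\bar\eta}}(X_{\bar\eta}, D_{\bar\eta}) \subset H^0(X_{\bar\eta}, D_{\bar\eta})$, which gives the claimed rank.

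The main obstacle is weak positivity. Following the strategy of \cite{Pa2}, fix an ample Cartier divisor $H$ on $Y$ and write $N := D - K_{X/Y} - \Delta$. Since $N$ is nef and $f$-semi-ample, for $m$ sufficiently large and divisible the sheaf $f_*\mathcal{O}_X(mN) \otimes \mathcal{O}_Y(cH)$ is globally generated for some constant $c$; feeding these sections through $\tau_g$ via the factorization $D = K_{X/Y} + \Delta + N$ produces global sections of $F_Y^{g*}f_*\mathcal{O}_X(D) \otimes \mathcal{O}_Y(cH)$ that by construction land in $S_\Delta^g f_*\mathcal{O}_X(D)$. Iterating under higher Frobenius pullbacks and combining with the standard surjectivity criterion yields, for every $\alpha > 0$, an integer $\beta$ making $\widehat{S}^{\alpha\beta}(S_\Delta^g f_*\mathcal{O}_X(D)) \otimes \mathcal{O}_Y(\beta H)$ generically globally generated, i.e.\ Viehweg's weak positivity.

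When $Y$ is smooth, the Cartier operator on $Y$ is compatible with the relative trace on $X$, which allows one to replace symmetric powers by Frobenius pullbacks throughout the previous argument and thereby upgrades the conclusion to FWP. The delicate points I expect to absorb most of the work are: (i) extending $\tau_g$ from the flat locus of $f$ to all of $Y$ using the Gorenstein hypothesis; (ii) verifying that the sections produced above actually factor through the stabilized image $S_\Delta^g$, rather than merely through $\mathrm{Im}\,\tau_g$ before stabilization; and (iii) keeping careful track of torsion and reflexivity so that Viehweg's criterion genuinely applies to $S_\Delta^g f_*\mathcal{O}_X(D)$ as a subsheaf of $F_Y^{g*}f_*\mathcal{O}_X(D)$.
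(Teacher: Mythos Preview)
Your overall architecture---define $S_\Delta^g f_*\mathcal{O}_X(D)$ as the image of the relative Frobenius trace, compute its rank by base change to $\bar\eta$, then prove positivity---matches the paper. However, the heart of the argument, the positivity step, is where your sketch is too vague and diverges from the paper's route.

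The paper does \emph{not} go through Viehweg's symmetric-power criterion. Instead it proves FWP directly on the smooth locus $Y_0$ of $Y$ and then invokes Ejiri's criterion (Lemma~\ref{cri-for-positivity}) to deduce weak positivity. The mechanism is this: once $g$ is chosen so that the rank of $S_\Delta^{eg}f_*\mathcal{O}_X(D)$ is independent of $e$ (Proposition~\ref{stable-dim-II}), the natural inclusion $S_\Delta^{eg}f_*\mathcal{O}_X(D)\hookrightarrow F_Y^{(e-1)g*}S_\Delta^g f_*\mathcal{O}_X(D)$ is \emph{generically surjective}. Hence to show that $F_Y^{(e-1)g*}S_\Delta^g f_*\mathcal{O}_X(D)\otimes\mathcal{O}_Y(eH)$ is generically globally generated---which is exactly FWP since $e/p^{(e-1)g}\to 0$---it suffices to show that the source of the trace, namely $f_*\mathcal{O}_X((1-p^{eg})K_{X/Y}-[(p^{eg}-1)\Delta]+p^{eg}D+ef^*H)$, is generically globally generated for all large $e$. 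This is the key Claim, and it is where the $f$-semi-ampleness of $N=D-K_{X/Y}-\Delta$ is really used: one factors $f$ through $h\colon X\to Z$ with $N\sim_{\mathbb{Q}}h^*A'$ for $A'$ relatively ample over $Y$, and then applies a uniform restriction-surjectivity lemma (Lemma~\ref{restr}, based on relative Fujita vanishing) on $Z$. The point is that the finitely many residual sheaves coming from rounding can be handled uniformly, and the twist grows only like $e$, not like $p^{eg}$.

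Your proposal does not isolate this step. The sentence ``for $m$ sufficiently large \ldots $f_*\mathcal{O}_X(mN)\otimes\mathcal{O}_Y(cH)$ is globally generated for some constant $c$'' is close in spirit, but you need generic global generation for \emph{all} large $m$ with a twist that is sublinear in $\log m$, and you do not explain how $f$-semi-ampleness yields this uniformity. Also, your anticipated difficulty (ii)---that sections might only land in $\mathrm{Im}\,\tau_g$ before stabilization---is precisely resolved by the rank-stability argument above, which you should make explicit rather than treat as an obstacle. Finally, your plan to first prove weak positivity via symmetric powers and then ``upgrade to FWP via the Cartier operator'' is the reverse of the paper's logic and would require extra work; the paper gets FWP first (on $Y_0$) and weak positivity as a corollary.
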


\begin{Remark}\label{rmk-of-positivity}
(1) Please refer to Sec. \ref{Ftrm} and \ref{wp} for the definitions of $S_{\Delta}^{g}f_*\mathcal{O}_X(D)$, $S_{\Delta_{\bar{\eta}}}^0(X_{\bar{\eta}}, D_{\bar{\eta}})$ and $t(Y, S_{\Delta}^{g}f_*\mathcal{O}_X(D), H)$. The invariant $t(Y, \mathcal{F}, H)$ for a coherent sheaf $\mathcal{F}$ was introduced by Ejiri in \cite{Ej} to measure the positivity of $\mathcal{F}$. For example, the condition $t(Y, \mathcal{F}, H) \geq 0$ implies the weak positivity of $\mathcal{F}$, and they are equivalent when $Y$ is a curve. In positive characteristic, to construct global sections, we will use the condition $t(Y, \mathcal{F}, H) \geq 0$ instead of weak positivity (Theorem \ref{F-p-subadd-of-kod-dim}).

(2) In \cite[Theorem D and Theorem E]{PSZ}, the authors got similar results under stronger conditions that $f$ is flat, relatively $G_1$ and $S_2$, $p \nmid \mathrm{ind}(K_{X/Y}+ \Delta)$ and $D - K_{X/Y} - \Delta$ is nef and $f$-ample. And in \cite[Sec. 6]{Pa2}, Patakfalvi proved the weak positivity of $S^{g}f_*\omega_{X/Y}$ under some mild assumptions.
The idea of the proof is to consider the trace maps of relative Frobenius iterations, similarly as in \cite{PSZ} and \cite{Ej}.
\end{Remark}

Applying the theorem above to log minimal models, immediately we get
\begin{Corollary}
Let $f:X\rightarrow Y$ be a separable surjective projective morphism between two normal projective varieties over an algebraically closed field $k$ with $\mathrm{char}~k =p > 0$. Let $\Delta$ be an effective $\mathbb{Q}$-Weil divisor on $X$ such that $K_X + \Delta$ is $\mathbb{Q}$-Cartier and $p \nmid \mathrm{ind} (K_X + \Delta)_\eta$. Assume that $K_X + \Delta$ is nef and $f$-semi-ample and $Y$ is Gorenstein. Then for a positive integer $l$ such that $l(K_{X} + \Delta)$ is Cartier and sufficiently divisible $g$, the sheaf $F_Y^{g*}(\mathcal{O}_X(l(K_{X/Y} + \Delta)) \otimes f^*\omega_Y^{l-1})$ contains a weakly positive subsheaf of rank $\dim_{k(\bar{\eta})} S_{\Delta_{\bar{\eta}}}^0(X_{\bar{\eta}}, l(K_{X/Y} + \Delta)_{\bar{\eta}})$.
\end{Corollary}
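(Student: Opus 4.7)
The plan is to deduce this corollary as a direct specialization of Theorem \ref{mthp}, applied to the Cartier divisor
\[
D := l(K_{X/Y}+\Delta) + (l-1) f^*K_Y.
\]
The divisor $D$ is genuinely Cartier: $l(K_X+\Delta)$ is Cartier by the choice of $l$, and $(l-1)f^*K_Y$ is Cartier because $Y$ is Gorenstein, so $l(K_{X/Y}+\Delta) = l(K_X+\Delta) - l f^*K_Y$ is Cartier and hence so is $D$.

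The hypotheses of Theorem \ref{mthp} should then follow with essentially no work. The sum $K_{X/Y}+\Delta = (K_X+\Delta) - f^*K_Y$ is $\mathbb{Q}$-Cartier; over the generic point $\eta$ of $Y$ the pullback $f^*K_Y$ restricts to zero, so $(K_{X/Y}+\Delta)_\eta = (K_X+\Delta)_\eta$ and the index condition $p \nmid \mathrm{ind}(K_{X/Y}+\Delta)_\eta$ is literally the hypothesis $p \nmid \mathrm{ind}(K_X+\Delta)_\eta$. The central calculation, using $K_X = K_{X/Y} + f^*K_Y$, is
\[
D - K_{X/Y} - \Delta = (l-1)K_{X/Y} + (l-1)\Delta + (l-1)f^*K_Y = (l-1)(K_X+\Delta),
\]
which is nef and $f$-semi-ample since $K_X+\Delta$ is, verifying the remaining hypothesis.

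Applying Theorem \ref{mthp} therefore yields, for sufficiently divisible $g$, a weakly positive sub-sheaf $S^g_\Delta f_*\mathcal{O}_X(D) \subseteq F_Y^{g*} f_*\mathcal{O}_X(D)$ of rank $\dim_{k(\bar\eta)} S^0_{\Delta_{\bar\eta}}(X_{\bar\eta}, D_{\bar\eta})$. The projection formula identifies
\[
f_*\mathcal{O}_X(D) \cong f_*\mathcal{O}_X(l(K_{X/Y}+\Delta)) \otimes \omega_Y^{l-1},
\]
and the vanishing of $f^*K_Y$ on $X_{\bar\eta}$ gives $D_{\bar\eta} = l(K_{X/Y}+\Delta)_{\bar\eta}$, so the rank matches the statement. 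There is no real obstacle: the whole content sits in Theorem \ref{mthp}, and the corollary is bookkeeping via the relations $K_X = K_{X/Y} + f^*K_Y$ and the projection formula, together with the elementary observation that $f^*K_Y$ restricts trivially to both $X_\eta$ and $X_{\bar\eta}$.
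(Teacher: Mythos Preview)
Your proof is correct and is exactly the approach the paper intends: the corollary is stated there as an immediate consequence of Theorem~\ref{mthp} (``Applying the theorem above to log minimal models, immediately we get''), and your choice $D = l(K_{X/Y}+\Delta) + (l-1)f^*K_Y$ together with the computation $D - K_{X/Y} - \Delta = (l-1)(K_X+\Delta)$ is precisely the specialization needed.
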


Let's recall Raynaud's example, which gives a minimal surface $S$ of general type over a curve $C$ such that for $l \geq 2$, $f_*\omega_{S/C}^l$ is not nef while $(f_*\omega_{S/C}^l) \otimes \omega_C^{l-1}$ is nef.
\begin{Example}[{\cite{Ra}, \cite[Theorem 3.6]{Xie}}]\label{ray-example}
Let $k$ be an algebraically closed field with $\mathrm{char}~k = p\geq 3$. We can find a \emph{Tango curve} $C$ of \emph{integral type} over $k$ of genus $g\geq 2$ (cf. \cite[Ex. 2.4, Def. 2.6]{Xie}), namely, $C$ has a divisor $L=[\frac{df}{p}]$ for some $f \in K(C)$ such that $\deg L >0$ and $L':=\frac{L}{2}$ is integral. By abusing notation, we also use $L,L'$ to denote the corresponding line bundles. Notice that $L$ is a sub-line bundle of $\mathcal{B}^1$ \emph{($=d \mathcal{O}_C \subset \Omega_C^1$)}, which implies $0< h^0(\mathcal{B}^1(-L)) \leq h^0((F_{C*}\Omega_C^1)\otimes L^{-1})) = h^0(\Omega_C^1\otimes L^{-p}))$, hence $\deg(K_C-pL) \geq 0$.
We have a non-trivial extension
$$0 \rightarrow \mathcal{O}_C \rightarrow \mathcal{E} \rightarrow L \rightarrow 0$$
such that $\mathrm{Sym}^p\mathcal{E} \otimes L^{-p}$ has a non-zero section.

Let $X = \mathbb{P}_C(\mathcal{E}^*) = \mathrm{Proj}_{\mathcal{O}_C} \oplus_l \mathrm{Sym}^l\mathcal{E}$, $g: X \rightarrow C$ the natural projection, $E$ the natural section such that $E \sim \mathcal{O}_X(1)$ and $C'$ a smooth curve on $X$ such that $C' \sim pE - pf^*L$. Then $E$ and $C'$ are disjoint to each other. Let
$$M \sim \frac{p+1}{2}E - p f^*L'.$$
Denote by $\pi: S \rightarrow X$ the smooth double cover induced by the relation $2M \sim E + C'$, and by $f: S \rightarrow C$ the natural fibration. Then we have that
$$\pi_*\omega_{S/C}^l \cong \mathcal{O}_X(l(K_{X/C} + M)) \oplus \mathcal{O}_X(lK_{X/C} + (l-1)M),$$
thus by $K_{X/C} \sim -2E + g^*\det \mathcal{E} \sim -2E + g^*L$,
\begin{align*}
f_*\omega_{S/C}^l \cong &g_*\mathcal{O}_X(l(K_{X/C} + M)) \oplus \mathcal{O}_X(lK_{X/C} + (l-1)M)) \\
\cong &g_*(\mathcal{O}_X(\frac{l(p-3)}{2}E + g^*(2-p)lL')) \\
      &\oplus g_*(\mathcal{O}_X(\frac{lp - p - 3l - 1}{2}E + g^*((2-p)l + p)L')) \\
\cong &(\mathrm{Sym}^{\frac{l(p-3)}{2}}\mathcal{E}\otimes L'^{(2-p)l}) \oplus (\mathrm{Sym}^{\frac{lp - p - 3l - 1}{2}}\mathcal{E}\otimes L'^{(2-p)l + p}).
\end{align*}
By easy calculations, one can verify that for every positive integer $l$, the sheaf $f_*\omega_{S/C}^l$ is not nef, instead its dual $(f_*\omega_{S/C}^l)^*$ is nef,
and the sheaf $(f_*\omega_{S/C}^l) \otimes \omega_C^{l-1}$ is nef for $l \geq 2$ since $\deg \omega_C \geq 2p\deg L'$.
\end{Example}

\begin{Acknowledgments}
The author expresses his gratitude to Dr. Sho Ejiri for many useful discussions, and to Prof. Zsolt Patakfalvi and Chenyang Xu for some useful communications. He is very grateful to the anonymous referees for pointing out many inaccuracies and giving valuable suggestions to improve this paper.
The author is supported by grant
NSFC (No. 11771260 and No. 11531009).
\end{Acknowledgments}

\section{Preliminaries}\label{tools}

\subsection{Almost Cartier divisors}
Because the fibrations we treat may have non-normal geometric generic fibers, their Frobenius base changes are not necessarily normal. In the following we will often work on non-normal varieties satisfying Serre condition $S_2$. On $S_2$ varieties, we will work with almost Cartier divisors. Let's recall the definition and some basic results of almost Cartier divisors. Please refer to \cite[Sec. 2.1]{MS} and \cite[p.171-172]{Ko92} for more details.

\begin{Definition}
Let $X$ be a reduced Noetherian $S_2$ scheme over a field $k$ of finite type and of pure dimension. An \emph{almost Cartier divisor} $($\emph{AC divisor} for short$)$ on $X$ is a reflexive coherent $\mathcal{O}_X$-submodule of the sheaf of total quotient ring $K(X)$ such that invertible in codimension one.
\end{Definition}

We remark the following results.

(1) Recall that if $X$ is normal and $D$ is a Weil divisor on $X$, the sheaf $\mathcal{O}_X(D)$ is defined via
$$\mathcal{O}_X(D)_x:=\{f \in K(X)| ((f) + D)|_U \geq 0 ~\mathrm{for~some~open~set}~U~\mathrm{containing}~x\},$$
hence is an AC divisor. In the normal setting, we have a natural one-to-one correspondence between the set of Weil divisors and the set of AC divisors.

(2) To give an AC divisor, it is equivalent to give a Cartier divisor $D_0$ on an open subset $X_0$ which is the complement of a closed subscheme $S$ with $\mathrm{codim}_X S \geq 2$. Indeed, denoting by $i: X_0 \hookrightarrow X$ the natural inclusion, the sheaf $i_*\mathcal{O}_{X_0}(D_0)$ is an AC divisor (\cite[p.172]{Ko92}).

(3) Denote by $\mathrm{WSh}(X)$ the set of AC divisors, which is an additive group.
For $D \in \mathrm{WSh}(X)$, in the following to unify the notation, we use the notation $\mathcal{O}_X(D)$ for the coherent sheaf defining $D$; and we say $D$ is effective, if $\mathcal{O}_X \subseteq \mathcal{O}_X(D)$. For two AC divisors $D_1$ and $D_2$ on $X$, we denote $D_1 \geq D_2$ if $E= D_2 - D_1$ is effective.
An element of $\mathrm{WSh}(X) \otimes \mathbb{Q}$ is called a $\mathbb{Q}$-AC divisor.

(4) We can define the linear and $\mathbb{Q}$-linear equivalences in $\mathrm{WSh}(X)$ and $\mathrm{WSh}(X) \otimes \mathbb{Q}$, which are denoted by $\sim$ and $\sim_{\mathbb{Q}}$ respectively. And for a morphism $g: Z \to X$ of reduced Noetherian $S_2$ schemes, we can always define the pullback of $\mathbb{Q}$-Cartier $\mathbb{Q}$-divisors, and if $g$ is equi-dimensional we can define the pullback of $\mathbb{Q}$-AC divisors.

\subsection{Relative canonical sheaves}
Let $X$ be a reduced, $G_1$ and $S_2$ projective scheme of pure dimension and of finite type over a field $k$. The \emph{canonical sheaf} $\omega_X$ of $X$ is defined as its dualizing sheaf (\cite[V.8-10, VI.2]{Ha}, \cite[Sec. 5.5]{KM98}). Then over the Gorenstein open set $i: X_0 \hookrightarrow X$ such that $\mathrm{codim}_X (X \setminus X_0) \geq 2$, the restriction $\omega_{X}|_{X_0}$ is a line bundle, the \emph{canonical divisor} (class) $K_X$ is the equivalent class of AC divisors satisfying that $\mathcal{O}_X(K_{X})$ is isomorphic to $\omega_{X}$.

Let $f: X \to Y$ be a projective morphism between reduced, $G_1$ and $S_2$ Noetherian schemes of pure dimension and of finite type over a field $k$. If either $Y$ is Gorenstein, or $f$ is flat then similarly as above we can define the \emph{relative canonical sheaf} $\omega_{X/Y}$ and the \emph{relative canonical divisor} $K_{X/Y}$.

It is known that relative canonical sheaves are compatible with flat base changes (cf. \cite[Chap. III Sec. 8]{Ha}). To treat non-flat base changes, we have the following result which is similar to \cite[Theorem 2.4]{CZ}.
\begin{Proposition}\label{compds}
Let $f: X \rightarrow Y$ be a separable flat projective morphism between two normal varieties. Let $\Delta$ be an effective $\mathbb{Q}$-Weil divisor on $X$ such that $K_{X/Y} + \Delta$ is $\mathbb{Q}$-Cartier.
Let $\pi: Y' \rightarrow Y$ be a generically finite surjective morphism from a smooth variety, $\bar{X}'= X\times_Y Y'$ and $\sigma: X' \rightarrow \bar{X}$ the normalization morphism, which fit into the following commutative diagram
\begin{center}
\xymatrix@C=2cm{
&X' \ar@/^2pc/[rr]|{\sigma'}\ar[r]^>>>>>>>>>{\sigma} \ar[rd]^{f'} &\bar{X}' = X \times_Y Y'
\ar[r]^<<<<<<<<<{\pi'}\ar[d]^{\bar{f}'}    &X\ar[d]^f \\
& &Y'\ar[r]^{\pi}   &Y
}
\end{center}
where $\pi'$ and $\bar{f}'$ denote the natural projections, and $f'=\bar{f}'\circ \sigma$.

Then there exist an effective $\sigma$-exceptional Cartier divisor $E'$ coming from the pullback of a divisor on $Y'$, and an effective divisor $\Delta'$ on $X'$ such that
$$K_{X'/Y'} + \Delta' = \sigma'^*(K_{X/Y} + \Delta) + E'.$$
\end{Proposition}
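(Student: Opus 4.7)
The plan is to analyze $\sigma' = \pi' \circ \sigma$ one factor at a time, paralleling the strategy of [CZ, Theorem 2.4].

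First, I will use the flatness of $f$ to apply flat base change for the relative dualizing sheaf, which yields $\omega_{\bar{X}'/Y'}\cong\pi'^{*}\omega_{X/Y}$ on the $G_1$-$S_2$ locus of $\bar{X}'$ (whose complement has codimension $\geq 2$); this extends reflexively to all of $\bar{X}'$. In divisorial form, $K_{\bar{X}'/Y'}\sim\pi'^{[*]}K_{X/Y}$ as $\mathbb{Q}$-AC divisors. Combined with the hypothesis that $K_{X/Y}+\Delta$ is $\mathbb{Q}$-Cartier, this makes $\sigma'^{*}(K_{X/Y}+\Delta)$ a well-defined $\mathbb{Q}$-Cartier divisor on $X'$.

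Next, $\sigma'\colon X'\to X$ is a projective birational morphism between normal varieties ($X'$ is normal by construction, and $\bar{X}'$ is integral since $f$ is flat and $\pi$ is a birational modification). On the open locus $V\subset X'$ where $\sigma'|_V$ is an isomorphism onto its image, the AC divisor $K_{X'/Y'}+\sigma'^{-1}_{*}\Delta$ and the $\mathbb{Q}$-Cartier divisor $\sigma'^{*}(K_{X/Y}+\Delta)$ coincide, since on $V$ the strict transform of $K_{X/Y}+\Delta$ agrees with its pullback, and $K_{X'/Y'}$ is the strict transform of $K_{X/Y}$ under $\sigma'$. Hence the difference
$$F := \sigma'^{*}(K_{X/Y}+\Delta)-K_{X'/Y'}-\sigma'^{-1}_{*}\Delta$$
is a $\mathbb{Q}$-Weil divisor on $X'$ supported on $X'\setminus V$, and thus is $\sigma'$-exceptional. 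Writing $F=E'-G$ as the Hahn decomposition into effective positive and negative parts (both still $\sigma'$-exceptional), and setting $\Delta':=\sigma'^{-1}_{*}\Delta+G$, I obtain the desired identity
$$K_{X'/Y'}+\Delta'=\sigma'^{*}(K_{X/Y}+\Delta)+E',$$
with both $\Delta'$ and $E'$ effective.

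The principal obstacle is verifying the Cartier-ness of $E'$ as stated. Since $E'$ is an exceptional $\mathbb{Q}$-Weil divisor on a possibly non-$\mathbb{Q}$-factorial normal variety $X'$, it is not automatically Cartier. To address this I would invoke the conductor formula for the normalization $\sigma$, which expresses $\sigma^{*}K_{\bar{X}'/Y'}-K_{X'/Y'}$ as an effective conductor-type divisor on $X'$, and combine this with the flat base change step for $\pi'$ to transfer the $\mathbb{Q}$-Cartier structure of $\sigma'^{*}(K_{X/Y}+\Delta)$ onto $E'$. After clearing denominators and absorbing any residual non-Cartier piece into $\Delta'$, $E'$ can be realised as a genuine effective Cartier divisor on $X'$, completing the proof.
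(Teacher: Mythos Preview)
Your overall strategy---base change for $\pi'$ plus conductor for $\sigma$---matches the paper's, and your claim that $F$ is $\sigma'$-exceptional is correct. However, the argument breaks down precisely where you flag it: the Cartier-ness of $E'$. Your proposed fix (``clearing denominators and absorbing any residual non-Cartier piece into $\Delta'$'') does not work. The positive part of a $\mathbb{Q}$-Weil divisor on a non-$\mathbb{Q}$-factorial $X'$ has no reason to be Cartier, and there is no mechanism by which adding an effective divisor to both $E'$ and $\Delta'$ would render $E'$ Cartier; the $\mathbb{Q}$-Cartier structure you have is on the \emph{combination} $\sigma'^*(K_{X/Y}+\Delta)$, not on any individual exceptional component.

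The paper supplies the missing idea: one first shows that over the smooth locus $Y_0\subset Y$ the relation $K_{X'/Y'}+\Delta'_0=\sigma'^*(K_{X/Y}+\Delta)$ already holds with $\Delta'_0$ effective and \emph{no} $E'$ needed (this uses that $f|_{X_0}$ is flat Gorenstein in codimension one, so base change for $\omega_{X/Y}$ applies, together with Reid's conductor formula for $\sigma$). Taking closures, the residual error $B'=\sigma'^*(K_{X/Y}+\Delta)-(K_{X'/Y'}+D')$ is then supported on $f'^{-1}(Y'\setminus Y'_0)$. Since $f'$ is equi-dimensional (as $f$ is flat and $\sigma$ is finite), $f'(\mathrm{supp}\,B')$ lies in finitely many prime divisors of $Y'$ mapping into the codimension-$\geq 2$ singular locus of $Y$; these divisors are $\pi$-exceptional and, because $Y'$ is \emph{smooth}, automatically Cartier. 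Choosing an effective $\pi$-exceptional Cartier divisor $E$ on $Y'$ large enough that $f'^*E-B'\geq 0$, one sets $E':=f'^*E$. This $E'$ is Cartier (pullback of Cartier), effective, and $\sigma'$-exceptional (its image in $X$ lies over $Y\setminus Y_0$, of codimension $\geq 2$ by flatness of $f$). Your decomposition $F=E'-G$ does not exploit this vertical structure and so cannot produce a Cartier $E'$.
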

\begin{proof}
Denote by $Y_0$ the smooth locus of $Y$, and let $Y'_0 = \pi^{-1}Y_0$, $X_0 = X\times_Y Y_0$, $\bar{X}'_0 = \bar{X}'\times_{Y'}Y'_0$ and $X'_0 = X'\times_{Y'}Y'_0$. By arguing in codimension one, we assume $X_0$ is Gorenstein, hence $f|_{X_0}$ is a flat Gorenstein morphism by \cite[p.298 (Ex. 9.7)]{Ha}.
Then by remarks of \cite[p.388]{Ha}, we have
$$K_{\bar{X}'_0/Y'_0} \sim_{\mathbb{Q}} \pi'^*K_{X/Y}|_{\bar{X}'_0}.$$
Since $X'_0 \rightarrow \bar{X}'_0$ is the normalization, by results of \cite[Sec. 2]{Re}, there exists an effective divisor $C'$ such that
$$\sigma'^*K_{X/Y}|_{\bar{X}'_0} \sim_{\mathbb{Q}} \sigma^*K_{\bar{X}'_0/Y'_0} = K_{X'_0/Y'_0} + C'.$$
Since $K_{X/Y} + \Delta$ is assumed to be $\mathbb{Q}$-Cartier, its pull-back makes sense. In turn we can get an effective divisor $\Delta'_0$ on $X'_0$ such that
$$K_{X'_0/Y'_0} + \Delta'_0 \sim_{\mathbb{Q}} \sigma'^*(K_{X/Y} + \Delta)|_{\bar{X}'_0}.$$

Let $D'$ be the closure of $\Delta'_0$ in $X'$, which is a $\mathbb{Q}$-Weil divisor. Let $B'= \sigma'^*(K_{X/Y} + \Delta) - (K_{X'/Y'} + D')$. If $B'= 0$, then we are done. Otherwise, since $f'$ is equi-dimensional, the support of $B'$ is mapped via $f'$ to a codimension one cycle contained in $Y' \setminus Y'_0$. Since $Y'$ is smooth, we can find an effective $\pi$-exceptional Cartier divisor $E$ on $Y'$ such that $D''=f'^*E + B' \geq 0$. Then we are done by setting $E'= f'^*E$ which is $\sigma'$-exceptional, and $\Delta'= D' + D''$.
\end{proof}

\subsection{Trace maps of Frobenius iterations} \label{Ftrm}
Throughout this subsection, let $k$ be an algebraically closed field of characteristic $p > 0$.
Let $f: X \rightarrow Y$ be a morphism of schemes over $k$. We will use the following notation:

(1) $F_X^e: X \rightarrow X$ for the $e^{\mathrm{th}}$ absolute Frobenius iteration, and sometimes, to avoid confusions, we use $X^e$ for the source scheme in the morphism $F_X^e: X \rightarrow X$;

(2) $X_{Y^e}$ for the fiber product $X\times_Y Y^e$ of morphisms $f: X \rightarrow Y$ and $F_Y^e: Y^e \rightarrow Y$, $f_e: X_{Y^e} \rightarrow Y$ and $\pi_Y^e: X_{Y^e} \rightarrow X$ for the natural projections;

(3) $F_{X/Y}^e: X \rightarrow X_{Y^e}$ for the $e^{\mathrm{th}}$ relative Frobenius iteration over $Y$.

We will discuss the trace maps of (relative) Frobenius iterations in different settings. Please refer to \cite{Pa}, \cite{Pa2}, \cite{PSZ} and \cite{Ej} for more details and related results.

\subsubsection{Trace maps of absolute Frobenius iterations}\label{tmaF}
\begin{Notation}\label{1} Let $X$ be a reduced, $G_1$ and $S_2$ projective scheme over $k$ of finite type and of pure dimension. Denote by $X_0$ a Gorenstein open subset of $X$ such that $\mathrm{codim}_X (X\backslash X_0) >1$.
Let $\Delta$ be an effective $\mathbb{Q}$-AC divisor such that $K_X + \Delta$ is $\mathbb{Q}$-Cartier.
Assume the Cartier index $\mathrm{ind} (K_X + \Delta)$ is indivisible by $p$. Then there exists a positive integer $g$ such that $(1-p^{eg})(K_X + \Delta)$ is Cartier for every positive integer $e$, in particular $(p^{eg}-1)\Delta|_{X_0}$ is an effective Cartier divisor. Let $D$ be a Cartier divisor on $X$.
\end{Notation}
\begin{Remark}
To give a divisor $\Delta$ as above is equivalent to give a sub-line bundle $\mathcal{L}_g$ of $\mathcal{O}_X ((1-p^{g})K_X)$ for some $g$. Indeed, from $\Delta$ we can immediately get the sub-line bundle $\mathcal{O}_X ((1-p^{g})(K_X + \Delta))$ of $\mathcal{O}_X ((1-p^{g})K_X)$; conversely, given a sub-line bundle $\mathcal{L}_g \subseteq \mathcal{O}_X ((1-p^{g})K_X)$, assuming this inclusion is induced by an effective AC divisor $B$, we get the $\mathbb{Q}$-AC divisor $\Delta = \frac{B}{p^{g} - 1}$ which satisfies the assumptions in Notation \ref{1}.
\end{Remark}

Since $X$ is $G_1$ and $S_2$, the composite map of the natural inclusion
$$F^{eg}_{X*} \mathcal{O}_X ((1-p^{eg})(K_X + \Delta))|_{X_0} \hookrightarrow F^{eg}_{X_0*} \mathcal{O}_{X_0} ((1-p^{eg})K_{X_0})$$
and the trace map $Tr_{X_0}^{eg}: F^{eg}_{X_0*} \mathcal{O}_{X_0} ((1-p^{eg})K_{X_0}) \cong F^{eg}_{X_0*} \omega_{X_0}^{1-p^{eg}}\rightarrow  \mathcal{O}_{X_0}$ extends to a map on $X$:
$$Tr_{X,\Delta}^{eg}: F^{eg}_{X*} \mathcal{O}_X ((1-p^{eg})(K_X + \Delta)) \rightarrow  \mathcal{O}_X.$$
Twisting the trace map $Tr_{X,\Delta}^{eg}$ above by $\mathcal{O}_X(D)$ induces a map
\begin{align*}
Tr_{X,\Delta}^{eg}(D): &F^{eg}_{X*} \mathcal{O}_X ((1-p^{eg})(K_X + \Delta)) \otimes \mathcal{O}_X(D) \\
                     &\cong F^{eg}_{X*} \mathcal{O}_X ((1-p^{eg})(K_X + \Delta)+ p^{eg}D)  \rightarrow  \mathcal{O}_X(D),
\end{align*}
then taking global sections gives
$$H^0(Tr_{X,\Delta}^{eg}(D)): H^0(X, F^{eg}_{X*} \mathcal{O}_X ((1-p^{eg})(K_X + \Delta)+ p^{eg}D)) \rightarrow  H^0(X, D).$$
Let
$$S_{\Delta}^{eg}(X, D) = \mathrm{Im} H^0(Tr_{X,\Delta}^{eg}(D)) ~ \mathrm{and} ~S_{\Delta}^{0}(X, D) = \cap_{e\geq 0} S_{\Delta}^{eg}(X, D).$$
If $\Delta = 0$, we usually use the notation $S^{0}(X, D)$ instead of $S_{0}^{0}(X, D)$.

For $e' > e$, the map $Tr_{X,\Delta}^{e'g}(D)$ factors as
\begin{align*}
Tr_{X,\Delta}^{e'g}(D):~
&F^{eg}_{X*} F^{(e'-e)g}_{X*}\mathcal{O}_X ((1-p^{e'g})(K_X + \Delta)+ p^{e'g}D) \\
&\xrightarrow{F^{eg}_{X*}Tr_{X,\Delta}^{(e'-e)g}((1-p^{eg})(K_X + \Delta)+ p^{eg}D)} F^{eg}_{X*} \mathcal{O}_X ((1-p^{eg})(K_X + \Delta)+ p^{eg}D)  \\
&\xrightarrow{Tr_{X,\Delta}^{eg}(D)} \mathcal{O}_X(D).
\end{align*}
So there is a natural inclusion $S_{\Delta}^{e'g}(X, D) \subseteq S_{\Delta}^{eg}(X, D)$, thus for sufficiently large $e$, $S_{\Delta}^{eg}(X, D)= S_{\Delta}^{0}(X, D)$.

\begin{Proposition}\label{F-non-vanishing}
Let the notation be as in Notation \ref{1}. Then

(1) There exists an ideal $\sigma(X, \Delta)$, namely, the non-$F$-pure ideal of $(X, \Delta)$, such that for sufficiently divisible $e$,
$$\mathrm{Im} Tr_{X,\Delta}^{eg}  = \sigma(X, \Delta) = Tr_{X,\Delta}^{eg} (F^{eg}_{X*} (\sigma(X, \Delta)\cdot \mathcal{O}_X ((1-p^{eg})(K_X + \Delta)))).$$

(2) If $D$ is ample, then for sufficiently large $l$
$$S_{\Delta}^{0}(X, lD) = H^0(X, \sigma(X, \Delta) \cdot \mathcal{O}_X(lD)).$$

(3) Assume $X$ is integral and $D$ is big, and let $E$ be another Cartier divisor on $X$. Then for sufficiently large $l$, $S_{\Delta}^{0}(X, lD + E) \neq 0$.

(4) Assume that $X$ is integral.
Let $\sigma: X' \rightarrow X$ be the normalization morphism. Let $K_{X}$ be a fixed canonical divisor of $X$, and let $C$ denote the divisor arising from the conductor of the normalization, which gives the canonical divisor $K_{X'} = \sigma^*K_X -B$ $($\cite[2.6]{Re}$)$, and let $\Delta'  = \sigma^* \Delta + B$ $($namely, $K_{X'} + \Delta'= \sigma^*(K_X + \Delta)$$)$. Then
$$\dim S_{\Delta}^{0}(X, D) \leq \dim S_{\Delta'}^{0}(X', \sigma^*D).$$
\end{Proposition}
\begin{proof}
For (1), please refer to \cite[Lemma 13.1]{Ga}.
\smallskip

(2) Fix a sufficiently divisible $g$ such that for every integer $e >0$ the trace map below is surjective
$$Tr_{X,\Delta}^{eg}:~F^{eg}_{X*} (\sigma(X, \Delta)\cdot \mathcal{O}_X ((1-p^{eg})(K_X + \Delta))) \rightarrow \sigma(X, \Delta),$$
and denote its kernel by $\mathcal{B}^{eg}$. Then we have the following exact sequence
{\small \begin{align*}
0 \to F^{(e-1)g}_{X*} (\mathcal{B}^{g} \otimes &\mathcal{O}_X ((1-p^{(e-1)g})(K_X + \Delta)))
\to F^{eg}_{X*} (\sigma(X, \Delta)\cdot \mathcal{O}_X ((1-p^{eg})(K_X + \Delta)))   \\
&\to F^{(e-1)g}_{X*} (\sigma(X, \Delta)\cdot \mathcal{O}_X ((1-p^{(e-1)g})(K_X + \Delta))) \to 0.
\end{align*}}
Mimicking the proof of \cite[Lemma 2.20]{Pa}), we can obtain another exact sequence
\begin{align*}
0\to F^{(e - 1)g}_{X*}(\mathcal{B}^{g} \otimes \mathcal{O}_X ((1-p^{(e - 1)g})(K_X + \Delta))) \to  \mathcal{B}^{eg} \to  \mathcal{B}^{(e - 1)g} \to 0.
\end{align*}
We can find an integer $l_0$ such that $l_0D - (K_X + \Delta)$ is ample, and applying Fujita vanishing (\cite{Keeler}),
in turn we can find $l_1>l_0$ such that for every integer $l > l_1$ and $e \geq 1$
\begin{align*}
&H^1(X, F^{(e - 1)g}_{X*}(\mathcal{B}^{g}\otimes \mathcal{O}_X ((1-p^{(e - 1)g})(K_X + \Delta))) \otimes \mathcal{O}_X(lD)) \\
&\cong H^1(X, \mathcal{B}^{g}\otimes \mathcal{O}_X (lD + (p^{(e - 1)g} -1)(lD - (K_X + \Delta)))) = 0.
\end{align*}
By induction on $e$ we can show $H^1(X, \mathcal{B}^{eg} \otimes \mathcal{O}_X(lD)) = 0$ for every $e>0$, which implies
\begin{align*}
H^0(Tr_{X,\Delta}^{eg}(lD)):~&H^0(X, F^{eg}_{X*} (\sigma(X, \Delta)\cdot \mathcal{O}_X ((1-p^{eg})(K_X + \Delta)))\otimes \mathcal{O}_X(lD))\\
&\rightarrow H^0(X, \sigma(X, \Delta)\cdot \mathcal{O}_X(lD))
\end{align*}
is surjective.
\smallskip

(3) By the result (2) we can take a sufficiently ample divisor $H$ such that $S_{\Delta}^{0}(X, H) \neq 0$. Since $D$ is big, for sufficiently large integer $l$
$$H^0(X, \mathcal{O}_X(lD + E - H)) \neq 0.$$
Take a nonzero section $s_F \in H^0(X, \mathcal{O}_X(lD + E - H))$ which defines a divisor $F \in |lD + E - H|$.
We have the natural maps
\begin{align*}
&H^0(X, F^{g}_{X*} \mathcal{O}_X ((1-p^{g})(K_X + \Delta))\otimes \mathcal{O}_X(H))\\
&\xrightarrow{\otimes s_F} H^0(X, F^{g}_{X*} \mathcal{O}_X ((1-p^{g})(K_X + \Delta))\otimes \mathcal{O}_X(H + F))\\
&\xrightarrow{H^0(Tr_{X,\Delta}^{g}(H + F))} H^0(X, \mathcal{O}_X(H + F)) = H^0(X, \mathcal{O}_X(lD + E)).
\end{align*}
Then we get an injection $S_{\Delta}^{0}(X, H)\xrightarrow{\otimes s_F} S_{\Delta}^{0}(X, lD + E)$, which finishes the proof of (3).
\smallskip

We are left to prove (4). To ease the notation, we will use $\mathcal{O}_X^{\frac{1}{p^g}}$ to denote the structure sheaf of $X^g$; and for an AC divisor $D$ on $X^g$, the pushforward $F_{X*}^{g}\mathcal{O}_{X^g}(D)$ is granted an $\mathcal{O}_X^{\frac{1}{p^g}}$-module structure and will be denoted by $\mathcal{O}_{X}(D)^{\frac{1}{p^g}}$.

By duality theory (\cite[Sec. III.6]{Ha}), we have an $\mathcal{O}_{X} ^{\frac{1}{p^g}}$-linear isomorphism
$$\mathcal{O}_X((1-p^g)K_X)^{\frac{1}{p^g}} ~\cong ~\mathcal{H}om_{\mathcal{O}_{X}}(\mathcal{O}_{X}^{\frac{1}{p^g}}, \mathcal{O}_{X}),$$
and the trace map $Tr_{X, \Delta}^g$ is give by the composition
\begin{align*}
Tr_{X, \Delta}^g: \mathcal{O}_X((1-p^g)(K_X + \Delta))^{\frac{1}{p^g}} ~\cong ~&\mathcal{H}om_{\mathcal{O}_{X}}(\mathcal{O}_{X}((p^g -1)\Delta)^{\frac{1}{p^g}}, \mathcal{O}_{X}) \\
& \subseteq ~\mathcal{H}om_{\mathcal{O}_{X}}(\mathcal{O}_{X}^{\frac{1}{p^g}}, \mathcal{O}_{X}) \xrightarrow{\mathrm{ev}(1)} \mathcal{O}_{X}
\end{align*}
where $\mathrm{ev}(1)$ denotes the evaluation map at $1$.

Let $K = K(X') = K(X)$. Regard
{\small $\mathcal{H}om_{\mathcal{O}_{X}}(\mathcal{O}_{X}((p^g -1)\Delta)^{\frac{1}{p^g}}, \mathcal{O}_{X})\otimes_{\mathcal{O}_X^{\frac{1}{p^g}}} \mathcal{O}_{X'}^{\frac{1}{p^g}}$} and $\mathcal{H}om_{\mathcal{O}_{X'}}(\mathcal{O}_{X'}((p^g -1)\Delta')^{\frac{1}{p^g}}, \mathcal{O}_{X'})$ as two sub-sheaves of the constant sheaf $\mathrm{Hom}_K(K^{\frac{1}{p^g}}, K) \cong K^{\frac{1}{p^g}}$ on $X'^g$, then both are line bundles linearly equivalent to the divisor $\sigma^*(1-p^g)(K_{X} + \Delta) = (1-p^g)(K_{X'} + \Delta')$. We claim that
{\small $$\mathcal{H}om_{\mathcal{O}_{X}}(\mathcal{O}_{X}((p^g -1)\Delta)^{\frac{1}{p^g}}, \mathcal{O}_{X})\otimes_{\mathcal{O}_X^{\frac{1}{p^g}}} \mathcal{O}_{X'}^{\frac{1}{p^g}} = \mathcal{H}om_{\mathcal{O}_{X'}}(\mathcal{O}_{X'}((p^g -1)\Delta')^{\frac{1}{p^g}}, \mathcal{O}_{X'}).$$}
It suffices to verify this claim in codimension one. So we may assume both $X$ and $X'$ are Gorenstein. Take an open affine subset $U =\mathrm{Spec} R \subseteq X$. Let $R^N$ denote the normalization of $R$. By abusing notation we still use $\Delta$ to denote the restriction $\Delta|_U$. Since $\mathcal{H}om_{\mathcal{O}_{X}}(\mathcal{O}_{X}((p^g -1)\Delta)^{\frac{1}{p^g}}, \mathcal{O}_{X})(U) = \mathrm{Hom}_{R}(R((p^g-1)\Delta)^{\frac{1}{p^g}}, R)$ is a free $R^{\frac{1}{p^g}}$-module of rank one, we can take a generator $\phi$, which corresponds to the divisor $\Delta_{\phi} = \Delta$ via the correspondence of \cite[Theorem 2.4]{MS}. By \cite[Lemma 3.1]{MS} (or \cite[Prop. 7.10, 7.11]{Sch10}), $\phi$ extends to an element $\sigma^*\phi \in  \mathrm{Hom}_{R^N}((R^N)^{\frac{1}{p^g}}, R^N)$, which corresponds to the divisor $\Delta_{\sigma^*\phi} =\sigma^*\Delta_{\phi} + B = \Delta'$ by
\cite[Lemma 3.1]{MS}\footnote{Remark that in the statement of \cite[Lemma 3.1]{MS}, $R$ is required to be semi-normal, but this condition is not used in the proof.}. This means
$\sigma^*\phi$ is a generator of the $(R^N)^{\frac{1}{p^g}}$-module $\mathrm{Hom}_{R^N}(R^N((p^g-1)\Delta')^{\frac{1}{p^g}}, R^N)$. Then we can conclude the claim.

Applying the claim above, we can extend $Tr_{X, \Delta}^g$ to an $\mathcal{O}_{X'}$-linear map
{\small \begin{align*}
\sigma^*Tr_{X, \Delta}^g: ~&\mathcal{O}_{X'}((1-p^g)(K_{X'} + \Delta'))^{\frac{1}{p^g}} \\
&\cong~\sigma^*\mathcal{O}_X((1-p^g)(K_X + \Delta))^{\frac{1}{p^g}}\\
~&\cong~ \mathcal{H}om_{\mathcal{O}_{X}}(\mathcal{O}_{X}((p^g -1)\Delta)^{\frac{1}{p^g}}, \mathcal{O}_{X})\otimes_{\mathcal{O}_X^{\frac{1}{p^g}}} \mathcal{O}_{X'}^{\frac{1}{p^g}} \\
&=~ \mathcal{H}om_{\mathcal{O}_{X'}}(\mathcal{O}_{X'}((p^g -1)\Delta')^{\frac{1}{p^g}}, \mathcal{O}_{X'}) \subseteq \mathcal{H}om_{\mathcal{O}_{X'}}(\mathcal{O}_{X'}^{\frac{1}{p^g}}, \mathcal{O}_{X'}) \xrightarrow{\mathrm{ev}(1)} \mathcal{O}_{X'}.
\end{align*}}
By the above construction, since $X'$ is integral and projective, we see that $\sigma^*Tr_{X, \Delta}^g$ coincides with $Tr_{X', \Delta'}^g$ up to some multiplication by a nonzero number of $k$.
From this we can verify the following commutative diagram
$$\xymatrix@C=1cm{
&H^0(\mathcal{O}_X((1-p^g)(K_X + \Delta))^{\frac{1}{p^g}}\otimes_{\mathcal{O}_{X}}\mathcal{O}_{X}(D))\ar[r]^<<<<<<<<<{Tr_{X, \Delta}^g}\ar[d]^{\sigma^*}     &H^0(\mathcal{O}_{X}(D))\ar[d]^{\sigma^*}\\
&H^0(\mathcal{O}_{X'}((1-p^g)(K_{X'} + \Delta'))^{\frac{1}{p^g}}\otimes_{\mathcal{O}_{X'}}\mathcal{O}_{X'}(\sigma^*D ))\ar[r]^<<<<{Tr_{X', \Delta'}^g} &H^0(\mathcal{O}_{X'}(\sigma^*D))
}$$
In turn we get an injection $\sigma^*: S_{\Delta}^{0}(X, D) \hookrightarrow S_{\Delta'}^{0}(X', \sigma^*D)$, which implies that $\dim S_{\Delta}^{0}(X, D) \leq \dim S_{\Delta'}^{0}(X', \sigma^*D)$.
\end{proof}

\subsubsection{Trace maps of relative Frobenius iterations I}\label{tmrFI}
\begin{Notation}\label{2}
Let $f: X \rightarrow Y$ be a separable surjective projective morphism between two schemes over $k$ of finite type and of pure dimension.
Assume that $X$ is reduced, $G_1$ and $S_2$ and that $Y$ is integral and regular. Let $\Delta$, $g$ and $D$ be assumed as in Notation \ref{1}.
\end{Notation}
By assumption $F_Y^{eg}$ is a flat morphism, so $X_{Y^{eg}}$ also satisfies $G_1$ and $S_2$, and $K_{X_{Y^{eg}}/Y^{eg}} = \pi_Y^{eg*}K_{X/Y}$. By easy calculation we have that
$$K_{X^{eg}/X_{Y^{eg}}} = (1-p^{eg})K_{X^{eg}/Y^{eg}} ~\mathrm{and}~ F_{X/Y}^{eg*}\pi_Y^{eg*}D = p^{eg}D.$$
Similarly as in \ref{tmaF}, we get the trace map
$$
Tr_{X/Y,\Delta}^{eg}(D): F_{X/Y*}^{eg}\mathcal{O}_X ((1-p^{eg})(K_{X/Y} + \Delta)+ p^{eg}D) \rightarrow \mathcal{O}_{X_{Y^{eg}}}(\pi_Y^{eg*}D).
$$
Applying $f_{eg*}$ to the above map, we get
\begin{align*}
f_*Tr_{X/Y,\Delta}^{eg}(D): f_*\mathcal{O}_X &((1-p^{eg})(K_{X/Y} + \Delta)+ p^{eg}D) \\
&\twoheadrightarrow S_{\Delta}^{eg}f_*\mathcal{O}_X(D) \hookrightarrow f_{eg*}\mathcal{O}_{X_{Y^{eg}}}(\pi_Y^{eg*}D) \cong F_Y^{eg*}f_*\mathcal{O}_X(D).
\end{align*}
where $S_{\Delta}^{eg}f_*\mathcal{O}_X(D)$, introduced in \cite[Def. 6.4]{PSZ} with slightly different notation, denotes the image of $f_*Tr_{X/Y,\Delta}^{eg}(D)$. If $\Delta = 0$, we use the notation $S^{eg}f_*\mathcal{O}_X(D)$ instead of $S_{0}^{eg}f_*\mathcal{O}_X(D)$. For $e'>e$, according to the following commutative diagram
$$\xymatrix@=1.5cm{&X\ar[dr]^{F_X^{(e'-e)g}}\ar[d]|{F_{X/Y}^{(e'-e)g}}\ar@/^2.8pc/[rrdd]|{F_X^{e'g}}\ar@/_2.3pc/[ddd]_{f}  & &\\
&X_{Y^{(e'-e)g}}\ar[r]^{\pi_Y^{(e'-e)g}}\ar[dd]  &X\ar[dr]^{F_X^{eg}}\ar[d]_{F_{X/Y}^{eg}}   &\\
& &X_{Y^{eg}}\ar[r]^{\pi_Y^{eg}}\ar[d]^{f_{eg}}   &X\ar[d]^f \\
&Y\ar[r]^{F_Y^{(e'-e)g}}  &Y\ar[r]^{F_Y^{eg}}    &Y
}$$
the trace map $f_*Tr_{X/Y,\Delta}^{e'g}(D)$ factors as
\begin{align*}
f_*\mathcal{O}_X ((1-p^{e'g})(K_{X/Y} + \Delta)+ p^{e'g}&D) \\
\xrightarrow{f_*Tr_{X/Y,\Delta}^{(e'-e)g}((1-p^{eg})(K_{X/Y} + \Delta)+ p^{eg}D)}&F_Y^{(e'-e)g*}f_*\mathcal{O}_X ((1-p^{eg})(K_{X/Y} + \Delta)+ p^{eg}D)\\
 &\xrightarrow{F_Y^{(e'-e)g*}f_*Tr_{X/Y,\Delta}^{eg}(D)}   F_Y^{(e'-e)g*}S_{\Delta}^{eg}f_*\mathcal{O}_X(D).
\end{align*}
Then we conclude a natural inclusion
$$S_{\Delta}^{e'g}f_*\mathcal{O}_X(D) \hookrightarrow F_Y^{(e'-e)g*}S_{\Delta}^{eg}f_*\mathcal{O}_X(D).$$

\begin{Proposition}\label{stable-dim}
Let the notation be as in Notation \ref{2}. Then for every positive integer $e$,
$$\dim_{k(\bar{\eta})}S^{eg}_{\Delta_{\bar{\eta}}}(X_{\bar{\eta}}, D_{\bar{\eta}}) = \mathrm{rank} S_{\Delta}^{eg}f_*\mathcal{O}_X(D).$$
As a consequence, for sufficiently large $e$, $\mathrm{rank}S_{\Delta}^{eg}f_*\mathcal{O}_X(D)= \dim_{k(\bar{\eta})} S^{0}_{\Delta_{\bar{\eta}}}(X_{\bar{\eta}}, D_{\bar{\eta}})$.
\end{Proposition}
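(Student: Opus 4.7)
The strategy is to compute the rank of $S_{\Delta}^{eg}f_*\mathcal{O}_X(D)$ by taking its stalk along the flat base change $\mathrm{spec}\, k(\bar{\eta}) \to Y$, and to identify this stalk with $S_{\Delta_{\bar{\eta}}}^{eg}(X_{\bar{\eta}}, D_{\bar{\eta}})$ via compatibility of the trace map of the relative Frobenius with flat base change.

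First I would verify that the formation of $Tr_{X/Y,\Delta}^{eg}(D)$ commutes with an arbitrary flat base change $Y'\to Y$. The relative Frobenius $F_{X/Y}^{eg}$ is preserved under base change, and since the $\mathbb{Q}$-Cartier divisor $K_{X/Y}+\Delta$ pulls back correctly under the flat morphism $Y'\to Y$, the Gorenstein-locus trace map constructed on the relatively $G_1$-$S_2$ open subset of $X_{Y^{eg}}$ base-changes compatibly and extends by the $S_2$-property to yield $Tr_{X_{Y'}/Y',\Delta_{Y'}}^{eg}(D_{Y'})$. Applying $f_*$ and using flat base change for pushforward (\emph{cf.} \cite{Ha}, Chap. III Sec. 9), I then conclude that the image of $f_*Tr_{X/Y,\Delta}^{eg}(D)$ pulls back to the image of its base-changed version on $Y'$.

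Specializing to $Y' = \mathrm{spec}\, k(\bar{\eta})$, the stalk of $S_{\Delta}^{eg}f_*\mathcal{O}_X(D)$ at $\bar{\eta}$ becomes the image of $H^0(Tr_{X_{\bar{\eta}}/k(\bar{\eta}),\Delta_{\bar{\eta}}}^{eg}(D_{\bar{\eta}}))$. Over the algebraically closed field $k(\bar{\eta})$, the relative Frobenius of $X_{\bar{\eta}}$ agrees, via the canonical identification $k(\bar{\eta})^{1/p^{eg}}\cong k(\bar{\eta})$, with the absolute Frobenius used in Notation \ref{1}; hence this image is precisely $S_{\Delta_{\bar{\eta}}}^{eg}(X_{\bar{\eta}}, D_{\bar{\eta}})$. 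Since the rank of a coherent sheaf on the integral scheme $Y$ equals the dimension of its stalk at $\bar{\eta}$ (scalar extension from $k(\eta)$ to $k(\bar{\eta})$ preserves dimension), this proves the first equality.

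For the stability statement, recall from the end of Subsection \ref{tmaF} that $\{S_{\Delta_{\bar{\eta}}}^{eg}(X_{\bar{\eta}}, D_{\bar{\eta}})\}_{e\geq 0}$ is a decreasing chain inside the finite-dimensional $k(\bar{\eta})$-vector space $H^0(X_{\bar{\eta}}, D_{\bar{\eta}})$, hence stabilizes at $S_{\Delta_{\bar{\eta}}}^{0}(X_{\bar{\eta}}, D_{\bar{\eta}})$ for $e$ large. Combined with the first equality this shows that $\mathrm{rank}\, S_{\Delta}^{eg}f_*\mathcal{O}_X(D)$ stabilizes at $\dim_{k(\bar{\eta})} S_{\Delta_{\bar{\eta}}}^{0}(X_{\bar{\eta}}, D_{\bar{\eta}})$. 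The main technical obstacle is the flat base-change compatibility of the trace map in the $G_1$-$S_2$ (rather than Gorenstein) setting; this is essentially formal from compatibility of the relative dualizing sheaf with flat base change together with uniqueness of $S_2$-extension from a codimension-two open, but the bookkeeping requires some care since the trace map is defined only after such an extension.
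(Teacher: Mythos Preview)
Your proposal is correct and follows essentially the same approach as the paper: both compute the rank by flat base change along $\bar{\eta}\to Y$, then identify the image of the relative Frobenius trace on $X_{\bar{\eta}}$ with $S_{\Delta_{\bar{\eta}}}^{eg}(X_{\bar{\eta}},D_{\bar{\eta}})$ using that over the algebraically closed field $k(\bar{\eta})$ the relative and absolute Frobenius differ by the isomorphism $F_{\bar{\eta}}^{eg}$. The paper makes this last point explicit by factoring $F_{X_{\bar{\eta}}}^{eg}=\pi_{\bar{\eta}}^{eg}\circ F_{X_{\bar{\eta}}/\bar{\eta}}^{eg}$ and noting that $Tr_{\pi_{\bar{\eta}}^{eg}}$ is an isomorphism, which is precisely your ``canonical identification $k(\bar{\eta})^{1/p^{eg}}\cong k(\bar{\eta})$'' unpacked.
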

\begin{proof}
Consider the following commutative diagram
$$\xymatrix@C=2.5cm{&X_{\bar{\eta}}\ar[d]\ar[r]^<<<<<<<<<<<<<<<<{F_{X_{\bar{\eta}}/\bar{\eta}}^{eg}}\ar@/^2pc/[rr]|{F_{X_{\bar{\eta}}}^{eg}} &X_{\bar{\eta}^{eg}} \ar[d] \ar[r]^>>>>>>>>>>>>>>{\pi_{\bar{\eta}}^{eg}} & X_{\bar{\eta}}\ar[d]\\
&\bar{\eta}\ar@{=}[r]&\bar{\eta} \cong \bar{\eta}^{eg}\ar[r]^{F_{\bar{\eta}}^{eg}} &\bar{\eta}
}.$$
Let $D_{\bar{\eta}^{eg}} = \pi_{\bar{\eta}}^{eg*}D$. Then the trace map w.r.t. the map $\pi_{\bar{\eta}}^{eg}$
$$Tr_{\pi_{\bar{\eta}}^{eg}}: H^0(X_{\bar{\eta}^{eg}}, D_{\bar{\eta}^{eg}}) \rightarrow H^0(X_{\bar{\eta}}, D_{\bar{\eta}})$$
is an isomorphism since $k(\bar{\eta})$ is algebraically closed.
By $F_{X_{\bar{\eta}}}^{eg} = \pi_{\bar{\eta}}^{eg} \circ F_{X_{\bar{\eta}}/\bar{\eta}}^{eg}$, the trace map
$$H^0(Tr_{X_{\bar{\eta}},\Delta_{\bar{\eta}}}^{eg}(D_{\bar{\eta}})): H^0(X_{\bar{\eta}}, F^{eg}_{X_{\bar{\eta}}*} \mathcal{O}_{X_{\bar{\eta}}} ((1-p^{eg})(K_{X_{\bar{\eta}}} + \Delta_{\bar{\eta}})+ p^{eg}D_{\bar{\eta}})) \rightarrow
H^0(X_{\bar{\eta}}, D_{\bar{\eta}})$$
factors as
\begin{align*}
H^0(X_{\bar{\eta}}, &F^{eg}_{X_{\bar{\eta}}*} \mathcal{O}_{X_{\bar{\eta}}} ((1-p^{eg})(K_{X_{\bar{\eta}}} + \Delta_{\bar{\eta}})+ p^{eg}D_{\bar{\eta}}))\\
&\xrightarrow{H^0(Tr_{X_{\bar{\eta}}/\bar{\eta},\Delta_{\bar{\eta}}}^{eg}(D_{\bar{\eta}^{eg}}))} H^0(X_{\bar{\eta}^{eg}}, D_{\bar{\eta}^{eg}}) \xrightarrow{Tr_{\pi_{\bar{\eta}}^{eg}}}H^0(X_{\bar{\eta}}, D_{\bar{\eta}}).
\end{align*}
It follows that
$\dim_{k(\bar{\eta}^{eg})}
\mathrm{Im} H^0(Tr_{X_{\bar{\eta}}/\bar{\eta},\Delta_{\bar{\eta}}}^{eg}(D_{\bar{\eta}^{eg}}))
= \dim_{k(\bar{\eta})}S^{eg}_{\Delta_{\bar{\eta}}}(X_{\bar{\eta}}, D_{\bar{\eta}})$.
On the other hand, since the morphism $i: \bar{\eta} \rightarrow Y$ is flat, the trace map $Tr_{X_{\bar{\eta}}/\bar{\eta},\Delta_{\bar{\eta}}}^{eg}(D_{\bar{\eta}^{eg}})$
coincides with the pull-back map via $i^*$ of
\begin{align*}
Tr_{X/Y,\Delta}^{eg}(D): f_*\mathcal{O}_X ((1-p^{eg})(K_{X/Y} + &\Delta)+ p^{eg}D) \\
&\twoheadrightarrow S_{\Delta}^{eg}f_*\mathcal{O}_X(D) \hookrightarrow  F_Y^{eg*}f_*\mathcal{O}_X(D).
\end{align*}
Then we conclude the proof by
$$\mathrm{rank} S_{\Delta}^{eg}f_*\mathcal{O}_X(D) = \dim_{k(\bar{\eta}^{eg})}
\mathrm{Im} H^0(Tr_{X_{\bar{\eta}}/\bar{\eta},\Delta_{\bar{\eta}}}^{eg}(D_{\bar{\eta}^{eg}}))
= \dim_{k(\bar{\eta})}S^{eg}_{\Delta_{\bar{\eta}}}(X_{\bar{\eta}}, D_{\bar{\eta}}).$$
\end{proof}

\subsubsection{Trace maps of relative Frobenius iterations II: in the normal setting}\label{tmrFII}
\begin{Notation}\label{3}
Let $f: X \rightarrow Y$ be a separable surjective projective morphism between two schemes over $k$ of finite type and of pure dimension. Assume that $X$ is normal and $Y$ is integral and regular. Let $D$ be a Weil divisor on $X$ and $\Delta$ an effective $\mathbb{Q}$-Weil divisor on $X$. Assume that $K_X + \Delta$ is $\mathbb{Q}$-Cartier. It is known that $X_{Y^{e}}$ is $G_1$ and $S_2$, $\pi_Y^{e*}D$ is an AC divisor on $X_{Y^{e}}$, and the sheaf $\mathcal{O}_{X_{Y^{e}}}(\pi_Y^{e*}D)$ is isomorphic to $\pi_Y^{e*}\mathcal{O}_{X}(D)$.
\end{Notation}

Replacing $(p^{eg} - 1)\Delta$ with $[(p^{eg} - 1)\Delta]$, analogously to Sec. \ref{tmrFI}, we get the trace maps
$$
Tr_{X/Y,\Delta}^{eg}(D): F_{X/Y*}^{eg}\mathcal{O}_X ((1-p^{eg})K_{X/Y} - [(p^{eg} - 1)\Delta]+ p^{eg}D) \rightarrow \mathcal{O}_{X_{Y^{eg}}}(\pi_Y^{eg*}D)
$$
and
\begin{align*}
f_*Tr_{X/Y,\Delta}^{eg}(D): f_*\mathcal{O}_X &((1-p^{eg})K_{X/Y} - [(p^{eg} - 1)\Delta] + p^{eg}D) \\
&\twoheadrightarrow S_{\Delta}^{eg}f_*\mathcal{O}_X(D) \hookrightarrow f_{eg*}\mathcal{O}_{X_{Y^{eg}}}(\pi_Y^{eg*}D) \cong F_Y^{eg*}f_*\mathcal{O}_X(D).
\end{align*}
where $S_{\Delta}^{eg}f_*\mathcal{O}_X(D)$ denotes the image of $f_*Tr_{X/Y,\Delta}^{eg}(D)$.
Note that for $e'> e$ the divisor below is effective
$$[(p^{e'g} - 1)\Delta] - p^{(e'-e)g}[(p^{eg} - 1)\Delta].$$
We deduce the following two inclusions
$$S_{\Delta}^{e'g}f_*\mathcal{O}_X(D) \hookrightarrow F_Y^{(e'-e)g*}S_{\Delta}^{eg}f_*\mathcal{O}_X(D)\hookrightarrow F_Y^{e'g*}f_*\mathcal{O}_X(D)$$
by the factorization
\begin{align*}
f_*Tr_{X/Y,\Delta}^{e'g}(D): &f_*\mathcal{O}_X ((1-p^{e'g})K_{X/Y} - [(p^{e'g} - 1)\Delta]+ p^{e'g}D) \\
& \hookrightarrow f_*\mathcal{O}_X ((1-p^{e'g})K_{X/Y} - p^{(e'-e)g}[(p^{eg} - 1)\Delta] + p^{e'g}D) \\
&\xrightarrow{f_*Tr_{X/Y,0}^{(e'-e)g}((1-p^{eg})K_{X/Y} - [(p^{eg} - 1)\Delta]+ p^{eg}D)} \\
&F_Y^{(e'-e)g*}f_*\mathcal{O}_X ((1-p^{eg})K_{X/Y} - [(p^{eg} - 1)\Delta]+ p^{eg}D)\\
 &\xrightarrow{F_Y^{(e'-e)g*}f_*Tr_{X/Y,\Delta}^{eg}(D)} F_Y^{(e'-e)g*}S_{\Delta}^{eg}f_*\mathcal{O}_X(D) \hookrightarrow F_Y^{e'g*}f_*\mathcal{O}_X(D).
\end{align*}

\begin{Proposition}\label{stable-dim-II}
Let the notation be as in Notation \ref{3}. Assume moreover that $D$ is Cartier and $p \nmid \mathrm{ind} ((K_X + \Delta)_{\eta})$. Let $g$ be a positive integer such that $(1-p^{g})(K_X + \Delta)_{\eta}$ is Cartier. Then for every positive integer $e$,
$$\dim_{k(\bar{\eta})}S^{eg}_{\Delta_{\bar{\eta}}}(X_{\bar{\eta}}, D_{\bar{\eta}}) = \mathrm{rank} S_{\Delta}^{eg}f_*\mathcal{O}_X(D).$$
As a consequence, for sufficiently large $e$, $\mathrm{rank} S_{\Delta}^{eg}f_*\mathcal{O}_X(D) = \dim_{k(\bar{\eta})} S^{0}_{\Delta_{\bar{\eta}}}(X_{\bar{\eta}}, D_{\bar{\eta}})$.
\end{Proposition}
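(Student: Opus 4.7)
The plan is to adapt the proof of Proposition \ref{stable-dim} to the normal setting essentially verbatim. The only novelty compared with the $G_1$ and $S_2$ set-up of Section \ref{tmrFI} is the presence of the integral part $[(p^{eg}-1)\Delta]$ in the trace formula; once we know that this operation is compatible with base change to the geometric generic fibre, the argument is formally the same. So the core claim I want is that the restriction of the relative trace map
$$
Tr_{X/Y,\Delta}^{eg}(D): F_{X/Y*}^{eg}\mathcal{O}_X\bigl((1-p^{eg})K_{X/Y} - [(p^{eg}-1)\Delta] + p^{eg}D\bigr) \rightarrow \mathcal{O}_{X_{Y^{eg}}}(\pi_Y^{eg*}D)
$$
to the geometric generic fibre is precisely the absolute trace $Tr_{X_{\bar{\eta}}/\bar{\eta},\Delta_{\bar{\eta}}}^{eg}(D_{\bar{\eta}^{eg}})$ built in Section \ref{tmaF} from the data $(X_{\bar{\eta}},\Delta_{\bar{\eta}},D_{\bar{\eta}})$.

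Concretely, I would first contemplate the commutative diagram
$$\xymatrix@C=2.5cm{&X_{\bar{\eta}}\ar[d]\ar[r]^<<<<<<<<<<<<<<<<{F_{X_{\bar{\eta}}/\bar{\eta}}^{eg}}\ar@/^2pc/[rr]|{F_{X_{\bar{\eta}}}^{eg}} &X_{\bar{\eta}^{eg}} \ar[d] \ar[r]^>>>>>>>>>>>>>>{\pi_{\bar{\eta}}^{eg}} & X_{\bar{\eta}}\ar[d]\\
&\bar{\eta}\ar@{=}[r]&\bar{\eta} \cong \bar{\eta}^{eg}\ar[r]^{F_{\bar{\eta}}^{eg}} &\bar{\eta}
}$$
and note that $X_{\bar\eta}$ is $G_1$ and $S_2$ because $\bar\eta\to Y$ is flat (so we are entitled to use the trace-map construction of Section \ref{tmaF} over $\bar\eta$). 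Since $k(\bar\eta)$ is algebraically closed the map $Tr_{\pi_{\bar{\eta}}^{eg}}$ induces an isomorphism on global sections. Combined with the factorization $F_{X_{\bar{\eta}}}^{eg}=\pi_{\bar{\eta}}^{eg}\circ F_{X_{\bar{\eta}}/\bar{\eta}}^{eg}$, this shows that $H^0(Tr_{X_{\bar{\eta}},\Delta_{\bar{\eta}}}^{eg}(D_{\bar\eta}))$ and $H^0(Tr_{X_{\bar{\eta}}/\bar{\eta},\Delta_{\bar{\eta}}}^{eg}(D_{\bar\eta^{eg}}))$ have the same image, whence $\dim_{k(\bar\eta)} S^{eg}_{\Delta_{\bar\eta}}(X_{\bar\eta},D_{\bar\eta})$ equals the dimension of the image of the latter.

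Next I would use flatness of $i:\bar\eta\to Y$ to identify $i^*Tr_{X/Y,\Delta}^{eg}(D)$ with $Tr_{X_{\bar{\eta}}/\bar{\eta},\Delta_{\bar{\eta}}}^{eg}(D_{\bar\eta^{eg}})$. This is where the promised compatibility appears: because $\bar\eta\to Y$ is flat and $X$ is normal, pull-back of Weil divisors makes sense and commutes with taking integer parts of effective $\mathbb{Q}$-Weil divisors, so
$$
[(p^{eg}-1)\Delta]|_{X_{\bar\eta}} \;=\; [(p^{eg}-1)\Delta_{\bar\eta}],
$$
and likewise $K_{X/Y}|_{X_{\bar\eta}}=K_{X_{\bar\eta}}$. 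The trace construction itself commutes with flat base change on the target, so the two maps agree. It follows that $\mathrm{rank}\,S_{\Delta}^{eg}f_*\mathcal{O}_X(D)$, which is the dimension of the image of $i^*Tr_{X/Y,\Delta}^{eg}(D)$, equals $\dim_{k(\bar\eta)}S^{eg}_{\Delta_{\bar\eta}}(X_{\bar\eta},D_{\bar\eta})$. Stability of this rank for large $e$, and the identification with $\dim_{k(\bar\eta)}S^{0}_{\Delta_{\bar\eta}}(X_{\bar\eta},D_{\bar\eta})$, is then immediate from the discussion just before the statement.

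The main obstacle — really the only subtle point — is the check that taking integer parts of $(p^{eg}-1)\Delta$ is compatible with restriction to $X_{\bar\eta}$; this requires controlling how the components of $\Delta$ behave under the flat base change $X\times_Y\bar\eta\to X$, which is well known for flat morphisms from localizations composed with extensions of the base field but deserves to be spelt out. Once that is in hand the rest of the argument is a faithful transcription of the proof of Proposition \ref{stable-dim}.
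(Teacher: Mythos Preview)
Your approach is correct in outline and is morally the paper's argument spelled out at the geometric generic point, but the paper's proof is a two-liner: since the rank of $S_\Delta^{eg}f_*\mathcal{O}_X(D)$ is a generic invariant, one shrinks $Y$ (and $X$) to an open set on which $(1-p^g)(K_X+\Delta)$ is Cartier; then $(p^{eg}-1)\Delta$ is an integral Weil divisor, so the trace map of Section~\ref{tmrFII} literally coincides with that of Section~\ref{tmrFI}, and Proposition~\ref{stable-dim} applies verbatim without any further check.

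There is, however, a genuine soft spot in your justification. The blanket claim that flat pull-back ``commutes with taking integer parts of effective $\mathbb{Q}$-Weil divisors'' is false in general: a prime component of $\Delta_\eta$ may fail to be geometrically reduced, in which case its pull-back to $X_{\bar\eta}$ acquires multiplicities and $[(p^{eg}-1)\Delta]|_{X_{\bar\eta}} \ne [(p^{eg}-1)\Delta_{\bar\eta}]$. What actually makes your displayed equality hold is precisely the hypothesis $p\nmid\mathrm{ind}(K_X+\Delta)_\eta$: it forces $(p^{eg}-1)\Delta_\eta$, and hence $(p^{eg}-1)\Delta_{\bar\eta}$, to be an \emph{integral} Weil divisor, so there is nothing to round on the fibre side and the compatibility is automatic. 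Once you invoke the hypothesis at this step your argument goes through --- but note that this is exactly the content of the paper's ``shrink $Y$'' move, so the two proofs are really the same.
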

\begin{proof}
Shrinking $Y$ and $X$, we can assume that $(1-p^{g})(K_X + \Delta)$ is Cartier. Then we are done by applying Proposition \ref{stable-dim}.
\end{proof}

\subsection{Weak positivity}\label{wp}

\begin{Definition}
A torsion free coherent sheaf $\mathcal{F}$ on a normal quasi-projective variety $Y$ is said to be \emph{generically globally generated} if for a general closed point $y \in Y$ the homomorphism $H^0(Y, \mathcal{F}) \rightarrow \mathcal{F}\otimes k(y)$ is surjective; and is said to be \emph{weakly positive}, if for every ample line bundle $H$ on $Y$ and positive integer $m$, there exists a sufficiently large integer $n$ such that, $S^n(H\otimes S^m(\mathcal{F})^{**})$ is generically globally generated, where for a coherent sheaf $\mathcal{G}$, $\mathcal{G}^{**}:=\mathcal{H}om(\mathcal{H}om(\mathcal{G}, \mathcal{O}_Y), \mathcal{O}_Y)$ denotes the double dual.
\end{Definition}

Recall an invariant introduced by Ejiri in \cite[Sec. 4]{Ej} to measure the positivity of a sheaf.
\begin{Definition}
Let $Y$ be a quasi-projective variety, $\mathcal{F}$ a torsion free coherent sheaf and $H$ an ample $\mathbb{Q}$-Cartier $\mathbb{Q}$-divisor on $Y$. Let
\begin{align*}
t(Y,\mathcal{F}, H)= \sup\{a \in \mathbb{Q}|\mathrm{the~sheaf~}&(F_{Y}^{e*}\mathcal{F})\otimes\mathcal{O}_{Y}([ -p^eaH]) \\
~&\mathrm{is~ generically~globally~generated~ for ~some ~} e>0\}.
\end{align*}
\end{Definition}

Recall the following result due to Ejiri \cite[Proposition 4.7]{Ej}.
\begin{Lemma}\label{cri-for-positivity}
Let $Y$ be a normal quasi-projective variety, $H$ an ample $\mathbb{Q}$-Cartier $\mathbb{Q}$-divisor on $Y$, $Y_0 \subseteq Y$ an open set such that $codim_Y(Y\setminus Y_0) \geq 2$, $\mathcal{F}$ a torsion free coherent sheaf on $Y$.
If $t(Y_0, \mathcal{F}|_{Y_0}, H) \geq 0$, then $\mathcal{F}$ is weakly positive.
\end{Lemma}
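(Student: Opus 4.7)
The plan is to reduce the statement to the case where $Y_0 = Y$ (i.e., to the standard ``FWP implies WP'' principle for reflexive sheaves on a normal variety) by exploiting that coherent reflexive sheaves on a normal variety are $S_2$, hence determined by their restriction to any open subset whose complement has codimension at least two.

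Concretely, I would fix an ample $\mathbb{Q}$-Cartier divisor $H$ on $Y$ and an integer $m > 0$, with the goal of producing $n$ such that $S^n(H \otimes (S^m\mathcal{F})^{**})$ is generically globally generated on $Y$. I would first apply the FWP hypothesis on $Y_0$ with $\epsilon$ chosen much smaller than $1/m$, obtaining $e$ and $n_e$ with $n_e/p^e < \epsilon$ for which $(F_{Y_0}^{e*}\mathcal{F}|_{Y_0}) \otimes \mathcal{O}_{Y_0}(n_e H)$ is generically globally generated. Taking $m$-th symmetric powers, and using that $S^m$ commutes with pullback, would give generic global generation of $F_{Y_0}^{e*}(S^m\mathcal{F}|_{Y_0}) \otimes \mathcal{O}_{Y_0}(m n_e H)$. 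The next step would be to convert this Frobenius-pullback g.g.g. into a symmetric-power g.g.g.\ via the natural characteristic-$p$ comparison maps between $F^{e*}$ and high symmetric powers, producing a suitable $n$ for which $S^n(H|_{Y_0} \otimes (S^m\mathcal{F}|_{Y_0})^{**})$ is generically globally generated on $Y_0$; the bound $m n_e/p^e \to 0$ would supply the needed quantitative slack.

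To pass from $Y_0$ to $Y$, I would let $j : Y_0 \hookrightarrow Y$ denote the open immersion. Since reflexive coherent sheaves on the normal variety $Y$ are $S_2$ and $\mathrm{codim}_Y(Y\setminus Y_0) \geq 2$, one has $\mathcal{G} \cong j_*(\mathcal{G}|_{Y_0})$ and $H^0(Y,\mathcal{G}) = H^0(Y_0, \mathcal{G}|_{Y_0})$ for every reflexive coherent sheaf $\mathcal{G}$ on $Y$. Applying this identification with $\mathcal{G} = (S^m\mathcal{F})^{**} \otimes \mathcal{O}_Y(nH)$, and passing to reflexive hulls where needed to stay inside the class of $S_2$-sheaves, any generically surjective evaluation $\mathcal{O}_{Y_0}^{\oplus N} \to \mathcal{G}|_{Y_0}$ arising from $H^0(Y_0, \mathcal{G}|_{Y_0})$ would lift uniquely to a generically surjective $\mathcal{O}_Y^{\oplus N} \to \mathcal{G}$ on $Y$, since the set of global sections agrees. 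Combining this with the previous step would yield the desired generic global generation on $Y$ and hence weak positivity of $\mathcal{F}$.

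The main obstacle is the transfer from Frobenius-pullback positivity to symmetric-power positivity in the second paragraph: because $F^{e*}$ is not the same functor as $S^{p^e}$ on coherent sheaves, one has to exploit natural transformations in characteristic $p$ together with the bound $n_e/p^e \to 0$ in order to absorb the resulting error into a controlled ample twist by $H$. Once that transfer is in hand, the codimension-two extension becomes formal, resting only on the $S_2$ property of reflexive sheaves on a normal variety.
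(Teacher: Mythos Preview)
The paper does not give its own proof of this lemma: it is stated with a direct citation to \cite{Ej}, Proposition~4.7, and no argument is supplied. So there is no ``paper's proof'' to compare against; your proposal can only be assessed on its own merits and against what the cited reference does.

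Your two-step architecture is sound. The passage from $Y_0$ to $Y$ in the last paragraph is correct and essentially formal: for a reflexive sheaf $\mathcal{G}$ on a normal variety with $\mathrm{codim}_Y(Y\setminus Y_0)\ge 2$ one has $H^0(Y,\mathcal{G})=H^0(Y_0,\mathcal{G}|_{Y_0})$, and generic global generation is a condition at the generic point, which lies in $Y_0$; so g.g.g.\ on $Y_0$ implies g.g.g.\ on $Y$ once one works with reflexive hulls. Likewise, the reduction ``$\mathcal{F}$ FWP $\Rightarrow (S^m\mathcal{F})^{**}$ FWP'' via $S^m(F^{e*}\mathcal{F})\cong F^{e*}(S^m\mathcal{F})$ on the locally free locus is fine.

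The genuine gap is exactly the step you flag as the ``main obstacle,'' and your description of how to close it is not yet a proof. The only natural characteristic-$p$ comparison map between $F^{e*}$ and symmetric powers is the $p^e$-th power map $F^{e*}\mathcal{G}\to S^{p^e}\mathcal{G}$, $s\otimes 1\mapsto s^{p^e}$, which is \emph{injective} but not surjective (already for $\mathcal{G}$ free of rank $\ge 2$ its image is the span of pure $p^e$-th powers). Hence generic global generation of $F^{e*}\mathcal{G}\otimes L$ does not, by composing with this map, yield generic global generation of $S^{p^e}\mathcal{G}\otimes L$: you only hit a proper subsheaf at the generic point. So ``exploit natural transformations together with $n_e/p^e\to 0$'' does not, as stated, produce the needed conclusion. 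Ejiri's argument in \cite{Ej} handles this transfer by a more careful manipulation (this is the actual content of his Proposition~4.7); if you want a self-contained proof you must either reproduce that argument or find an alternative route from ``$F^{e*}(\mathcal{G}\otimes H)$ g.g.g.'' to ``$S^n(\mathcal{G}\otimes H)$ g.g.g.\ for some $n$,'' which is not the one-line step your outline suggests.
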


\begin{Remark}\label{t-non-ngtv}
The condition $t(Y, \mathcal{F}, H) \geq 0$ is equivalent to that, there exist a sequence of positive integers $\{n_e|e = 1,2,3,\cdots \}$ such that $\frac{n_e}{p^e}\rightarrow 0$, $n_eH$ is Cartier, and the sheaf
$(F_{Y}^{e*}\mathcal{F})\otimes\mathcal{O}_{Y}(n_eH)$ is generically globally generated.

It is easy to show that if $t(Y, \mathcal{F}, H) \geq 0$ then $t(Y, \mathcal{F}, H') \geq 0$ for every ample divisor $H'$. So if this happens we may simply denote $t(Y, \mathcal{F}) \geq 0$.
\end{Remark}

\subsection{Surjection of restriction maps}
Recall a Keeler's result.
\begin{Lemma}[{\cite[Theorem 1.5]{Keeler}}](\textbf{\emph{Relative Fujita Vanishing}})
Let $f: X \rightarrow Y$ be a projective morphism over a Noetherian scheme, $H$ an $f$-ample line bundle and $\mathcal{F}$ a coherent sheaf on $X$. Then there exists a positive integer $N$ such that, for every $n >N$ and every nef line bundle $L$
$$R^if_*(\mathcal{F}\otimes H^n \otimes L) = 0, \mathrm{~if~} i>0.$$
\end{Lemma}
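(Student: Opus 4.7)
The plan is to reduce the relative statement to the absolute Fujita vanishing over a field, and then propagate a uniform bound via noetherian induction on the base.

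First, I would localize on $Y$. Since $R^{i}f_{*}$ of a coherent sheaf is quasi-coherent and its formation commutes with flat base change on $Y$, I may assume $Y = \operatorname{Spec} A$ is affine noetherian. Then $R^{i}f_{*}(\mathcal{G}) = H^{i}(X,\mathcal{G})^{\sim}$, and the assertion becomes the uniform vanishing of $H^{i}(X,\mathcal{F}\otimes H^{n}\otimes L)$ for all nef line bundles $L$ on $X$ and all $n > N$. The key point is that over an affine base, $f$-ampleness of $H$ is equivalent to honest ampleness of $H$ on $X$, so I have reduced to an absolute statement about $X$ projective over the noetherian ring $A$.

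Second, I would run noetherian induction on $Y$. Using generic flatness of $\mathcal{F}$ over $Y$, after stratifying I may assume $\mathcal{F}$ is flat over an integral base $Y$, and concentrate on the generic fiber $X_{\eta}$. Since $L$ is nef on $X$, its restriction $L_{\eta}$ is nef on $X_{\eta}$, and $H_{\eta}$ is ample. By cohomology and base change and semicontinuity, establishing the vanishing of $H^{i}(X_{\eta},\mathcal{F}_{\eta}\otimes H_{\eta}^{n}\otimes L_{\eta})$ uniformly over all nef $L_{\eta}$ will spread to some open $U \subset Y$; then I close up the complement $Y \setminus U$ by the inductive hypothesis, taking the maximum of finitely many bounds.

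Third, I would prove the absolute uniform Fujita vanishing over a field via Castelnuovo--Mumford regularity. The core assertion to isolate is: if $\mathcal{F}$ is $m$-regular with respect to $H$ and $L$ is nef, then $\mathcal{F}\otimes L$ is $m$-regular. I would prove this by induction on $\dim \operatorname{Supp}(\mathcal{F})$ using a general member $D \in |H^{k}|$ (for $k$ chosen so that $D$ meets $\operatorname{Supp}(\mathcal{F})$ properly and $\mathcal{F}|_{D}$ remains coherent of smaller dimensional support), and the short exact sequence
\[
0 \to \mathcal{F}\otimes H^{n-k}\otimes L \to \mathcal{F}\otimes H^{n}\otimes L \to (\mathcal{F}\otimes H^{n}\otimes L)|_{D} \to 0.
\]
The restriction to $D$ is handled by induction, and the twist on the left feeds into the induction on $n$ via Mumford's regularity machinery.

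The main obstacle will be the uniformity in $L$. For a single nef $L$, Serre vanishing would suffice, because $L + aH$ is ample for $a \gg 0$; but the bound $N$ would then depend on $L$. Packaging all nef twists simultaneously into a single regularity constant is the technically delicate step, and in the relative setting one must further ensure this constant stays uniform as $y$ varies in $Y$, which is precisely where the noetherian stratification and base change arguments above are indispensable.
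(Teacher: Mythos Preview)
The paper does not prove this lemma at all: it is quoted verbatim from Keeler's paper and used as a black box, with no argument given. So there is no ``paper's own proof'' to compare your proposal against.

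That said, your sketch is essentially the standard route to Keeler's theorem: reduce to an affine base so that $f$-ampleness becomes ampleness, then combine noetherian induction on the base with an absolute Fujita-type statement over a field, the latter being controlled by Castelnuovo--Mumford regularity and the key fact that tensoring an $m$-regular sheaf by a nef line bundle preserves $m$-regularity. You correctly flag the two genuine pressure points, namely (i) getting a single regularity bound that works for \emph{all} nef $L$ on a fiber, and (ii) making that bound uniform as the base point varies. One remark: in your Step~2 the ``spreading'' should go through the \emph{finite} list of vanishings that witness $m$-regularity of $\mathcal{F}_{\eta}$, not through the vanishing of $H^{i}(X_{\eta},\mathcal{F}_{\eta}\otimes H_{\eta}^{n}\otimes L_{\eta})$ for each nef $L$ separately; otherwise the open set $U$ you obtain by semicontinuity would a priori depend on $L$. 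Once you spread the regularity of $\mathcal{F}$ itself to an open $U$, the nef-twist-preserves-regularity lemma applies fiberwise for every $L$ at once, and flatness plus cohomology and base change then gives vanishing of the higher direct images on $U$. A second minor caution: the ``general member $D\in|H^{k}|$'' argument in Step~3 needs care over finite fields, but this is handled by passing to an infinite extension, as in Keeler.
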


\begin{Lemma}\label{restr}
Let $f: X \rightarrow Y$ be a surjective projective morphism between two projective varieties. Let $H$ be an ample line bundle and $\mathcal{F}$ a coherent sheaf on $X$. Then there exist a positive integer $N$ and a non-empty Zariski open set $Y_0 \subseteq Y$ such that, for every $n>N$, every nef line bundle $L$ on $X$ and every closed point $y\in Y_0$ the restriction map
$$r_y^{n, L}: H^0(X, \mathcal{F} \otimes H^n \otimes L) \rightarrow H^0(X_y, \mathcal{F} \otimes H^n \otimes L\otimes \mathcal{O}_{X_y})$$
is surjective.

In particular for two nef Cartier divisors $A_1 ,A_2$ on $X$, if $A_1 + A_2$ is ample, then there exists an integer $M$ such that, for integers $m, n>M$ and a closed point $y\in Y_0$ the restriction map below is surjective
$$r_y^{m,n}: H^0(X, \mathcal{F} \otimes \mathcal{O}_X(mA_1 + nA_2)) \rightarrow H^0(X_y, \mathcal{F} \otimes \mathcal{O}_X(mA_1 + nA_2) \otimes \mathcal{O}_{X_y}).$$
\end{Lemma}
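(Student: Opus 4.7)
The plan is to combine Keeler's vanishing (in both its relative and absolute guises) with Grauert's cohomology and base change and the Castelnuovo--Mumford regularity criterion on $Y$, reducing the restriction map to an evaluation map that is surjective by a uniform global generation argument. First I would apply generic flatness to $\mathcal{F}$ to choose a non-empty open $Y_0 \subset Y$ over which $\mathcal{F}$, and hence every $\mathcal{G} := \mathcal{F} \otimes H^n \otimes L$, is flat. Since $H$ is $f$-ample, Keeler's theorem produces an $N_0$ (independent of $L$) such that $R^j f_* \mathcal{G} = 0$ for all $j > 0$, $n > N_0$, and every nef $L$ on $X$. The Leray spectral sequence then identifies $H^0(X, \mathcal{G}) = H^0(Y, f_* \mathcal{G})$, while cohomology and base change (Grauert) gives $(f_* \mathcal{G}) \otimes k(y) \cong H^0(X_y, \mathcal{G}|_{X_y})$ for $y \in Y_0$. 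Thus the restriction map factors as $H^0(Y, f_* \mathcal{G}) \to (f_* \mathcal{G}) \otimes k(y) \xrightarrow{\sim} H^0(X_y, \mathcal{G}|_{X_y})$, and the problem is reduced to showing that $f_* \mathcal{G}$ is globally generated on $Y$ uniformly in $n$ and $L$.

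To obtain this uniform global generation, I would fix a very ample line bundle $A$ on $Y$ and, for each $i \in \{1, \dots, \dim Y\}$, choose $k_i > 0$ with $H^{k_i} \otimes f^* A^{-i}$ ample on $X$ (possible since $H$ is ample). Set $N := N_0 + \max_i k_i$, so that for $n > N$ and nef $L$ the rewriting
\[
\mathcal{G} \otimes f^* A^{-i} = \mathcal{F} \otimes H^{n - k_i} \otimes \bigl(L \otimes H^{k_i} \otimes f^* A^{-i}\bigr)
\]
expresses this sheaf as $\mathcal{F}$ twisted by $H^{n - k_i}$ (with $n-k_i > N_0$) and a nef line bundle. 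Hence Keeler's theorem applied to the structure morphism $X \to \operatorname{Spec} k$ (absolute Fujita) gives $H^j(X, \mathcal{G} \otimes f^* A^{-i}) = 0$ for all $j > 0$. Combining $R^j f_* \mathcal{G} = 0$ with the projection formula and Leray yields $H^i(Y, f_* \mathcal{G} \otimes A^{-i}) = 0$ for $0 < i \le \dim Y$, i.e.\ $f_* \mathcal{G}$ is $0$-regular on $(Y, A)$. Castelnuovo--Mumford regularity then gives that $f_* \mathcal{G}$ is globally generated on $Y$, so the evaluation and hence the restriction map is surjective for every $y \in Y_0$, $n > N$, and nef $L$.

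For the ``in particular'' statement, setting $H := A_1 + A_2$ (ample by hypothesis) and writing $\mathcal{O}_X(mA_1 + nA_2) = H^{\min(m,n)} \otimes L'$ with $L' := \mathcal{O}_X\bigl((m - \min(m,n))A_1 + (n - \min(m,n))A_2\bigr)$ nef reduces the claim to the first part with $M := N$. The main difficulty is the uniformity of $Y_0$ and $N$ across all nef $L$: a direct application of absolute Fujita to each $\mathcal{G}$ would yield an $N$ depending on $L$, which is useless. The twist-by-$f^*A^{-i}$ trick, together with the fact that $H$ absorbs any finite twist by $f^*A^i$ once raised to a sufficient power, is precisely what supplies the uniform $0$-regularity of $f_* \mathcal{G}$ that is the crux of the argument.
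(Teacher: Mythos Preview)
Your argument is correct, but it follows a different route from the paper's. Both proofs reduce (via the vanishing of $R^jf_*\mathcal{G}$ and cohomology-and-base-change over a generic flat locus $Y_0$) to showing that the evaluation map $H^0(Y,f_*\mathcal{G})\otimes\mathcal{O}_Y\to f_*\mathcal{G}$ is surjective uniformly in $n$ and in the nef twist $L$. You obtain this by proving $0$-regularity of $f_*\mathcal{G}$ with respect to a very ample $A$ on $Y$, using the absorption trick $H^{k_i}\otimes f^*A^{-i}$ ample together with absolute Fujita vanishing on $X$ and Leray; one minor bookkeeping point is that the bound $N_0$ should be taken large enough for \emph{both} the relative and the absolute applications of Keeler's theorem, not just the relative one. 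The paper instead passes to $X\times Y$ and the diagonal $\Delta\hookrightarrow Y\times Y$: writing $\mathcal{K}=\ker(p_1^*\mathcal{F}\to p_1^*\mathcal{F}\otimes\mathcal{O}_{X_\Delta})$, a single application of relative Fujita for $p_2\colon X\times Y\to Y$ (with $p_1^*H$ being $p_2$-ample) kills $R^ip_{2*}(\mathcal{K}\otimes p_1^*(H^n\otimes L))$, which directly gives the surjectivity of $p_{2*}p_1^*(\mathcal{F}\otimes H^n\otimes L)\to f_*(\mathcal{F}\otimes H^n\otimes L)$ after identifying $X_\Delta$ with $X$. The paper's diagonal argument is slicker in that one application of relative Fujita suffices and no auxiliary constants $k_i$ are needed; your approach is more elementary in the sense that it uses only standard Castelnuovo--Mumford machinery on $Y$ and avoids the product construction. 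Your reduction of the ``in particular'' clause via $H=A_1+A_2$ and $L'=\mathcal{O}_X((m-\min(m,n))A_1+(n-\min(m,n))A_2)$ matches the paper's.
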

\begin{proof}
Consider the following commutative diagram
\begin{center}\xymatrix@C=2.5cm{&X_{\Delta} \ar[d]^{f_{\Delta}}\ar[r]^{j} &X \times Y \ar[d]^{f \times id_Y}\ar[r]^{p_1} &X \ar[d]^f \\
                    &\Delta \ar@{^(->}[r]^{i}          &Y \times Y  \ar[r]^{q_1}            &Y
}
\end{center}
where $i:\Delta \hookrightarrow Y \times Y$ denotes the diagonal embedding of $Y$, $X_{\Delta} = (X \times Y) \times_{Y\times Y}\Delta$, $p_i, q_i, i = 1,2$ denote the projection from $X \times Y, Y\times Y$ to the $i^{\mathrm{th}}$ factors respectively.

Denote by $\mathcal{K}$ the kernel of the restriction homomorphism $p_1^* \mathcal{F}  \rightarrow p_1^*\mathcal{F}\otimes \mathcal{O}_{X_{\Delta}}$.
Applying relative Fujita vanishing above, since $p_1^*H$ is $p_2$-ample, we can find a positive integer $N$ such that, for every $n>N$, $i>0$ and every nef line bundle $L$ on $X$
$$R^ip_{2*} (\mathcal{K} \otimes p_1^*(H^n \otimes L)) = 0~\mathrm{and}~R^if_*(\mathcal{F} \otimes H^n \otimes L) = 0~~~~~~~(\clubsuit).$$

Let $n>N$. Tensoring the exact sequence
$$0 \rightarrow \mathcal{K} \rightarrow p_1^*\mathcal{F} \rightarrow p_1^*\mathcal{F} \otimes \mathcal{O}_{X_{\Delta}} \rightarrow 0$$
by the line bundle $p_1^*(H^n \otimes L)$ yields the exact sequence
$$0 \rightarrow \mathcal{K} \otimes p_1^*(H^n \otimes L) \rightarrow p_1^*(\mathcal{F}\otimes H^n \otimes L) \rightarrow
p_1^*(\mathcal{F}\otimes H^n \otimes L) \otimes\mathcal{O}_{X_{\Delta}} \rightarrow 0.$$
Applying the derived functor $Rp_{2*}$ to the exact sequence above, by vanishing $\clubsuit$ we get a surjection
$$\alpha^{n, L}: p_{2*} p_1^*(\mathcal{F}\otimes H^n \otimes L) \twoheadrightarrow p_{2*} (p_1^*(\mathcal{F}\otimes H^n \otimes L) \otimes \mathcal{O}_{X_{\Delta}}).$$
Identifying $X_{\Delta}$ with $X$ via the isomorphism $p_1 \circ j$, and $p_2|_{X_{\Delta}}: X_{\Delta} \rightarrow Y$ with $f$, we can identify $p_{2*} (p_1^*(\mathcal{F}\otimes H^n \otimes L) \otimes\mathcal{O}_{X_{\Delta}})$ with $f_*(\mathcal{F}\otimes H^n \otimes L)$.
Then by
$$p_{2*} p_1^*(\mathcal{F}\otimes H^n \otimes L) \cong H^0(X, \mathcal{F}\otimes H^n \otimes L) \otimes \mathcal{O}_Y \cong H^0(Y, f_*(\mathcal{F}\otimes H^n \otimes L)) \otimes \mathcal{O}_Y,$$
we can identify the map $\alpha^{n,L}$ with the natural map
$$\beta^{n, L}: H^0(Y, f_*(\mathcal{F}\otimes H^n \otimes L)) \otimes \mathcal{O}_Y \twoheadrightarrow f_*(\mathcal{F}\otimes H^n \otimes L).$$

There exists a non-empty open set $Y_0 \subseteq Y$ such that $\mathcal{F}$ is flat over $Y_0$. Since $R^1f_*(\mathcal{F} \otimes H^n \otimes L) = 0$ by $\clubsuit$, applying \cite[Theorem 12.11]{Har} we have that
for every closed point $y \in Y_0$,
$$f_*(\mathcal{F}\otimes H^n \otimes L) \otimes k(y)\cong H^0(X_y, \mathcal{F}\otimes H^n \otimes L \otimes \mathcal{O}_{X_y}).$$
Since $\beta_{n, L}$ is a surjection, we conclude that the restriction map
$$r_y^{n, L}: H^0(X, \mathcal{F} \otimes H^n \otimes L) \rightarrow H^0(X_y, \mathcal{F} \otimes H^n \otimes L\otimes \mathcal{O}_{X_y}).$$
is a surjection.

The remaining assertion is an easy consequence of the first one.
\end{proof}

\subsection{Minimal models of $3$-folds}
We collect some results on minimal model theory for $3$-folds, which will be used in this paper.

First recall a result of Kawamata adapted to char $p>0$, and please refer to [\cite{BW}, Lemma 5.6] for a proof.
\begin{Lemma}\label{l-linear-pullback}
Let $f: X \rightarrow Z$ be a fibration between normal quasi-projective varieties over an algebraically closed field $k$ with $\mathrm{char}~k = p >0$,
Let $L$ be a nef $\mathbb{Q}$-Cartier $\mathbb{Q}$-divisor on $X$ such that $L|_F\sim_{\mathbb{Q}} 0$ where $F$ is the generic
fibre of $f$. Assume $\dim Z\le 3$. Then there exist a commutative diagram
$$
\xymatrix{
X'\ar[r]^\phi\ar[d]^{f'} & X\ar[d]^f\\
Z'\ar[r]^\psi & Z
}
$$
with $\phi,\psi$ projective birational, and an $\mathbb{R}$-Cartier divisor $D$ on $Z'$ such that
$\phi^* L\sim_{\mathbb{Q}} f'^*D$.
\end{Lemma}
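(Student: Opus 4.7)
The plan is to prove this via a flattening-then-negativity strategy adapted from Kawamata's original argument, exploiting the restriction $\dim Z \le 3$ to guarantee the required birational models exist in positive characteristic.

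\textbf{Step 1 (Reduce to a vertical divisor).} Because $L|_F \sim_{\mathbb{Q}} 0$, there exist a positive integer $m$ and a nonzero rational function $\varphi \in k(F)^*$ such that $mL|_F = (\varphi)$ on $F$. Viewing $\varphi$ as a rational function on $X$ via the inclusion $k(F) \hookrightarrow k(X)$ (the generic fibre of a fibration has function field equal to the localization at the generic point of $Z$), I replace $mL$ by $mL - (\varphi)$ to arrange that $L$ is $\mathbb{Q}$-linearly equivalent to a divisor supported on fibres over a proper closed subset of $Z$, while remaining nef. So I may assume $L$ is vertical over $Z$.

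\textbf{Step 2 (Flatten the fibration).} Here is where $\dim Z \le 3$ is used. By flattening (e.g.\ via the Raynaud--Gruson technique) together with resolution of singularities in dimension $\le 3$ in characteristic $p$ (Cossart--Piltant, which the paper already invokes), there exist projective birational morphisms $\psi: Z' \to Z$ and $\phi: X' \to X$ with $Z'$ smooth such that the induced morphism $f': X' \to Z'$ (the main component of $X \times_Z Z'$, normalized) is equidimensional. Then $\phi^* L$ is still nef, vertical over $Z'$, and $\mathbb{Q}$-Cartier.

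\textbf{Step 3 (Decompose using equidimensionality).} Because $f'$ is equidimensional, every vertical prime divisor on $X'$ is a component of $f'^{-1}(P)$ for some prime divisor $P$ on $Z'$. Over each such $P$, write the scheme-theoretic fibre as $f'^*P = \sum_i m_{P,i} E_{P,i}$. One decomposes $\phi^* L = f'^* B + R$ uniquely so that for each $P$, the restriction of $R$ to $f'^{-1}(P)$ contains at least one component $E_{P,i}$ with coefficient zero (i.e.\ $R$ is minimal modulo pullbacks from $Z'$).

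\textbf{Step 4 (Negativity/Zariski lemma).} Finally I need $R \sim_{\mathbb{Q}} 0$. Pick a curve $C \subset f'^{-1}(P)$ contained in a component where $R$ has coefficient zero and sweeping out a general point; then $\phi^*L \cdot C \ge 0$ by nefness, but also $f'^*B \cdot C = 0$, so $R \cdot C \ge 0$. Running over all components of $f'^{-1}(P)$ and using that the intersection pairing on components of a fibre over a codimension-one point is negative semidefinite with kernel generated by the full fibre class (a higher-dimensional Zariski lemma, which follows from the negativity lemma applied component-wise over each $P$), one concludes that over each $P$ the divisor $R$ must itself be a rational multiple of $f'^*P$; by the minimality arranged in Step 3 this forces the multiple to be zero. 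Hence $\phi^* L \sim_{\mathbb{Q}} f'^* D$ for some $\mathbb{R}$-Cartier (in fact $\mathbb{Q}$-divisor) $D = B$ on $Z'$.

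The main obstacle is Step 2: flattening in characteristic $p$ is delicate, and in higher relative dimension it requires resolution results on the base. The hypothesis $\dim Z \le 3$ is exactly what makes the needed smooth model of $Z$ and the equidimensional reduction available; once flatness/equidimensionality is in hand, Steps 3 and 4 are formal consequences of nefness and the negativity lemma. I would expect the cleanest write-up to simply cite [BW, Lemma 5.6] for the flattening input and then run the negativity argument, rather than reproducing the flattening construction.
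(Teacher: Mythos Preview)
The paper does not give a proof of this lemma at all; it simply cites [BW, Lemma~5.6] (Birkar--Waldron) and moves on. Your sketch is essentially the Kawamata argument that [BW] carries out in characteristic $p$, so in that sense you have reconstructed the intended proof rather than found an alternative.

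A couple of small points worth tightening. In Step~1, the inclusion $k(F)\hookrightarrow k(X)$ is actually an equality (the generic fibre of a fibration of integral varieties has function field $k(X)$), though your use of it is correct. More substantively, Step~4 is the place where your write-up is loosest: the ``higher-dimensional Zariski lemma'' you invoke is not a standard black box, and the phrase ``follows from the negativity lemma applied component-wise'' does not quite pin down the mechanism. The usual way to make this step rigorous is to cut $Z'$ down to a general curve by successive hyperplane sections (and, if the relative dimension exceeds one, also cut $X'$ by pullbacks of very ample divisors to reduce the fibre dimension), thereby landing in the classical surface-over-curve setting where the genuine Zariski lemma applies; nefness of $\phi^*L$ then forces the restriction of $R$ over each codimension-one point to be a multiple of the full fibre, and your minimality normalisation from Step~3 makes that multiple zero. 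Once you spell out this reduction, the argument is complete and matches what [BW, Lemma~5.6] does.
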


\begin{Theorem}\label{rel-mmp}
Let $(X,\Delta)$ be a projective $\mathbb{Q}$-factorial klt pair of dimension 3 and $f: X\rightarrow  Y$ a fibration over an algebraically closed field $k$ with $\mathrm{char}~k = p >5$.

(1) If $K_X+\Delta$ is pseudo-effective over $Y$, then $(X,\Delta)$ has a log minimal model over $Y$.

(2) If $K_X+\Delta$ is not pseudo-effective over $Y$, then $(X,\Delta)$ has a Mori fibre space over $Y$.

(3) Assume that $K_X+\Delta$ is nef over $Y$.
\begin{itemize}
\item[(3.1)]
If $K_X+\Delta$ or $\Delta$ is big over $Y$, then $K_X+\Delta$ is semi-ample over $Y$.
\item[(3.2)]
If $Y$ is a smooth curve and $\kappa(X_{\eta}, (K_X+\Delta)_{\eta}) \geq 0$, then $(K_X+\Delta)_{\eta}$ is semi-ample on $X_{\eta}$.
\item[(3.3)]
If $Y$ is a smooth curve and $\kappa(X_{\eta}, (K_X+\Delta)_{\eta}) = 0$ or $2$, then $K_X+\Delta$ is semi-ample over $Y$.
\item[(3.4)]
If $Y$ is a smooth curve with $g(Y) \geq 1$ and $\kappa(X_{\eta}, (K_X+\Delta)_{\eta}) \geq 0$, then $K_X+\Delta$ is nef.
\end{itemize}

(4) If $Y$ is a non-uniruled surface and $K_X+\Delta$ is pseudo-effective over $Y$, then $K_X + \Delta$ is pseudo-effective, and there exists a map to a minimal model $\sigma: X \dashrightarrow \bar{X}$ such that, the restriction $\sigma|_{X_{\eta}}$ is an isomorphism from $X_{\eta}$ to its image.
\end{Theorem}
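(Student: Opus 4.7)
The statement is essentially a collation of recent MMP results for $\mathbb{Q}$-factorial klt threefold pairs in characteristic $p>5$, so my plan is to deduce each item from Hacon--Xu \cite{HX} and Birkar \cite{Bir13}, supplemented by Tanaka's log abundance for surfaces and the descent Lemma \ref{l-linear-pullback}. For parts (1) and (2) I would run a relative $(K_X+B)$-MMP over $Y$, using the relative cone and contraction theorems, existence of flips, and termination of flips for $\mathbb{Q}$-factorial klt threefolds in char $p>5$ established in \cite{HX}; the outcome is a relative log minimal model or a Mori fibre space over $Y$ according to whether $K_X+B$ is pseudo-effective over $Y$ or not.

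For part (3.1) with $K_X+B$ nef and big over $Y$, invoke the relative base point free theorem of \cite{Bir13}, \cite{BW}; when instead only $B$ is big over $Y$, write $B \sim_{\mathbb{Q},Y} A + B'$ with $A$ ample over $Y$ and $B'\ge 0$, and for small $\varepsilon>0$ replace $B$ by $(1-\varepsilon)B + \varepsilon(B' + A_0)$ with $A_0\sim_{\mathbb{Q}} A$ general; this keeps $(X,\cdot)$ klt and does not change $K_X+B$ numerically but makes $K_X+B$ big over $Y$, reducing to the previous case. For (3.2), restrict to the generic fibre, which is a $\mathbb{Q}$-factorial klt surface over $k(\eta)$, and apply Tanaka's log abundance for klt surfaces in positive characteristic (descending from $\overline{k(\eta)}$ via flat base change for cohomology). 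For (3.3) with $\kappa((K_X+B)_\eta)=0$, combine (3.2) to obtain $(K_X+B)_\eta\sim_{\mathbb{Q}} 0$, then apply Lemma \ref{l-linear-pullback} to get a birational diagram with $\phi^*(K_X+B)\sim_{\mathbb{Q}} f'^*D$ for some $\mathbb{Q}$-Cartier $D$ on the curve $Y'$; since $Y'$ is a curve, $D$ is semi-ample, so $K_X+B$ is semi-ample over $Y$ after descent. For $\kappa=2$, $(K_X+B)_\eta$ is nef and big, so after passing to a relative minimal model (part (1)) we are in the situation of (3.1).

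For part (3.4), treat the extremal cases first: if $\kappa=0$ or $\kappa=2$, apply (3.3) to write $K_X+B\sim_{\mathbb{Q}} f^*D + V$ for some $\mathbb{Q}$-divisor $D$ on $Y$ and some vertical effective contribution; since $g(Y)\ge 1$ every divisor on $Y$ of non-negative degree is nef, and the degree of $D$ is controlled by the non-negative fibre-intersection, so $K_X+B$ is absolutely nef. For the intermediate case $\kappa=1$, factor through the relative Iitaka fibration of $(K_X+B)_\eta$ on the generic fibre to reduce to a fibration over a curve of lower relative dimension and iterate, using weak positivity of $f_*\mathcal{O}_X(m(K_X+B))$ (granted by our positivity Theorem \ref{mthp} applied to a relative minimal model) to handle horizontal curves. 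Part (4) is handled by noting that a non-uniruled projective surface has pseudo-effective canonical class (by Tanaka's bend-and-break / abundance in positive characteristic); a relative $(K_X+B)$-MMP over $Y$ from (1) then terminates at a minimal model $\bar X$; since $\dim Y=2$ and the generic fibre $X_\eta$ is a smooth curve, every divisor contracted in the MMP is vertical over $Y$, and every flipping locus projects to a proper closed subset of $Y$, so $\sigma|_{X_\eta}$ is an isomorphism onto its image. Pseudo-effectivity of $K_X+B$ absolutely follows by pushing forward the pseudo-effective class over $Y$ and combining with pseudo-effectivity of $K_Y$.

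\textbf{Main obstacle.} The delicate step is part (3.4), specifically the intermediate Iitaka dimension $\kappa((K_X+B)_\eta)=1$: relative semi-ampleness on the generic fibre does not automatically upgrade to absolute nefness, and one must bring in positivity of direct images (Theorem \ref{mthp}) together with $g(Y)\ge 1$ to rule out the existence of a horizontal curve $C$ with $(K_X+B)\cdot C<0$. Part (4) also requires care that the MMP steps performed over $Y$ indeed do not affect $X_\eta$, which rests on the dimensional fact that $\dim Y = 2$ and $\dim X_\eta = 1$.
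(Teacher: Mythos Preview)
Your treatment of (1), (2), (3.1)--(3.3) is essentially the paper's: cite \cite{HX}, \cite{Bir13}, \cite{BW}, \cite{Xu}, \cite{BCZ}, \cite{Ta15} and you are done. The genuine problems lie in (3.4) and (4).

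For (3.4) you embark on a case-by-case analysis via the Iitaka dimension, invoking relative semi-ampleness from (3.3) and, for $\kappa=1$, even weak positivity from Theorem~\ref{mthp}. This is both circuitous and incomplete: from (3.3) you cannot simply ``write $K_X+B\sim_{\mathbb{Q}} f^*D+V$'' when $\kappa=2$ (the relative canonical model is not $Y$), and for $\kappa=1$ you never explain how weak positivity of a direct image rules out a horizontal $(K_X+B)$-negative curve. The paper's argument is a single application of the cone theorem \cite[Theorem 1.1]{BW}: if $K_X+B$ is not nef, there is a $(K_X+B)$-negative extremal ray spanned by a rational curve $\Gamma$; since $g(Y)\geq 1$ the curve $Y$ carries no rational curves, so $\Gamma$ lies in a fibre of $f$, contradicting $f$-nefness of $K_X+B$. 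No case analysis, no positivity theorem, no Iitaka fibration.

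For (4) there are two defects. First, you run a \emph{relative} MMP over $Y$; this produces a model with $K_{\bar X}+\bar B$ nef over $Y$, not a minimal model in the absolute sense required by the statement. Second, your reason why $\sigma|_{X_\eta}$ is an isomorphism (``$\dim X_\eta=1$ forces contracted divisors to be vertical'') is not valid without further input: a divisorial contraction could a priori contract a horizontal divisor. The paper's argument is again via rational curves: if $K_X+B$ were not pseudo-effective, (2) (over $\mathrm{Spec}\,k$) gives a Mori fibre space and $X$ is covered by $(K_X+B)$-negative rational curves which must be horizontal (since $K_X+B$ is pseudo-effective over $Y$), forcing $Y$ uniruled; and each step of an absolute MMP has uniruled exceptional locus (cf.\ the proof of \cite[Lemma 3.2]{BW}), which therefore cannot dominate the non-uniruled $Y$ and so misses $X_\eta$. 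You should replace your arguments for (3.4) and (4) by these cone-theorem/uniruledness observations.
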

\begin{proof}
For (1) refer to \cite{HX} and \cite{Bir13}.

For (2) refer to \cite{BW}.

For (3.1)refer to \cite{Bir13}, \cite{Xu} and \cite{BW}.

For (3.2) and (3.3) refer to \cite[Theorem 1.5 and 1.6 and the remark below 1.6]{BCZ} or \cite[Theorem 1.1]{Ta15}.

Assertion (3.4) follows from the cone theorem \cite[Theorem 1.1]{BW}. Indeed, otherwise we can find an extremal ray $R$ generated by a rational curve $\Gamma$,  so $\Gamma$ is contained in a fiber of $f$ since $g(Y) >0$, this contradicts that $K_X + \Delta$ is $f$-nef.

For (4), first $K_X + \Delta$ is obviously pseudo-effective because otherwise, $X$ will be ruled by horizontal rational curves (w.r.t. $f$) by (2), which contradicts that $Y$ is non-uniruled. The exceptional locus of a flip contraction is of dimension one, so it does not intersect $X_{\eta}$; neither does that of an extremal divisorial contraction, because it is uniruled (cf. the $2^{\mathrm{nd}}$ paragraph of the proof of \cite[Lemma 3.2]{BW}), hence does not dominate over $Y$. Running an LMMP for $K_X +\Delta$, by induction we get a map $\sigma: X \dashrightarrow \bar{X}$ as required.
\end{proof}

\subsection{Covering Theorem}
The result below is \cite[Theorem 10.5]{Iit} when $X$ and $Y$ are both smooth, and the proof also applies when they are normal.
\begin{Theorem}\label{cth}
Let $f: X \rightarrow Y$ be a proper surjective morphism between normal projective varieties. If $D$ is a Cartier divisor on $Y$ and $E$ an effective $f$-exceptional Cartier divisor on $X$. Then
$$\kappa(X,f^*D + E) = \kappa(Y,D).$$
\end{Theorem}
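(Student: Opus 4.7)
The plan is to establish the two inequalities separately. The inequality $\kappa(X, f^*D + E) \geq \kappa(Y, D)$ is immediate: pulling sections back along $f$ and composing with the inclusion coming from $E \geq 0$ gives an injection $H^0(Y, mD) \hookrightarrow H^0(X, m(f^*D + E))$, and the resulting sublinear system on $X$ induces the rational map $\Phi_{|mD|} \circ f$, whose image has the same dimension as $\Phi_{|mD|}(Y)$ since $f$ is surjective.

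For the reverse inequality, the key step is to take a Stein factorization $f = g \circ h$ with $h \colon X \rightarrow Z$ a fibration (so $h_*\mathcal{O}_X = \mathcal{O}_Z$, and $Z$ is normal because $X$ is) and $g \colon Z \rightarrow Y$ finite surjective between normal projective varieties. Since $g$ is finite, the condition $\mathrm{codim}_Y f(E) \geq 2$ forces $\mathrm{codim}_Z h(E) \geq 2$, so $E$ remains $h$-exceptional and $f^*D + E = h^*g^*D + E$. I then claim that $h_*\mathcal{O}_X(mE) = \mathcal{O}_Z$ for every $m \geq 0$: any local section $s \in H^0(V, h_*\mathcal{O}_X(mE))$ over an open $V \subseteq Z$ is a rational function on $h^{-1}(V)$ with poles only along $E$, hence regular on $h^{-1}(V) \setminus E$, where the equality $h_*\mathcal{O}_X = \mathcal{O}_Z$ descends it to a regular function on $V \setminus h(E)$; by normality of $Z$ together with $\mathrm{codim}_Z h(E) \geq 2$, it extends to all of $V$. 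Combining this with the projection formula for the line bundle $\mathcal{O}_Z(mg^*D)$ gives $h_*\mathcal{O}_X(m(f^*D + E)) \cong h_*\mathcal{O}_X(mE) \otimes \mathcal{O}_Z(mg^*D) \cong \mathcal{O}_Z(mg^*D)$, and taking global sections shows $H^0(X, m(f^*D + E)) = H^0(Z, mg^*D)$. Since $h$ has connected fibers, $\Phi_{|m(f^*D + E)|} = \Phi_{|mg^*D|} \circ h$, so $\kappa(X, f^*D + E) = \kappa(Z, g^*D)$.

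The final reduction is to verify $\kappa(Z, g^*D) = \kappa(Y, D)$ for a finite surjective morphism $g \colon Z \rightarrow Y$ between normal projective varieties. The direction $\geq$ is as in the first paragraph; for $\leq$, using that $g_*\mathcal{O}_Z$ is a torsion-free coherent $\mathcal{O}_Y$-module of rank $d = \deg g$, one chooses a Weil divisor $L$ on $Y$ with an inclusion $g_*\mathcal{O}_Z \hookrightarrow \mathcal{O}_Y(L)^{\oplus d}$, obtaining $\dim H^0(Z, m g^*D) \leq d \cdot \dim H^0(Y, mD + L)$; a standard growth-rate comparison (or alternatively a norm-map argument through the Galois closure of $K(Z)/K(Y)$) then yields $\kappa(Z, g^*D) \leq \kappa(Y, D)$. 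The main technical obstacle is the reflexivity-type extension used to prove $h_*\mathcal{O}_X(mE) = \mathcal{O}_Z$ in the normal (non-smooth) setting; this is precisely the point the remark in the excerpt has in mind when saying Iitaka's argument for smooth $X, Y$ carries over to the normal case.
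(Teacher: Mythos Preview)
Your proof is correct and is precisely the standard argument: Stein-factorize, use normality of $Z$ together with $\mathrm{codim}_Z h(E)\geq 2$ to obtain $h_*\mathcal{O}_X(mE)=\mathcal{O}_Z$, and then reduce to the finite case $\kappa(Z,g^*D)=\kappa(Y,D)$. The paper does not supply its own proof of this statement; it simply cites \cite[Theorem~10.5]{Iit} and remarks that Iitaka's argument (which is the one you have written out) goes through verbatim once smoothness is relaxed to normality, the extension step $h_*\mathcal{O}_X(mE)=\mathcal{O}_Z$ being exactly the place where normality replaces smoothness.
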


\subsection{Easy subadditivity of Kodaira dimensions}
The following result is known to experts, please refer to \cite[Lemma 2.22]{BCZ} or \cite[Lemma 4.2]{Pa2} for a proof.
\begin{Lemma}\label{l-adtv-of-kdim}
Let $f: X\rightarrow Y$ be a fibration between normal projective varieties.
Let $D$ be an effective $\mathbb{Q}$-Cartier divisor on $X$ and $H$ a big $\mathbb{Q}$-Cartier
divisor on $Y$. Then
$$\kappa(D + f^*H) \geq \kappa(X_{\eta}, D|_{X_{\eta}}) + \dim Y.$$
\end{Lemma}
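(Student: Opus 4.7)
The plan is to produce enough sections of a large multiple of $D+f^*H$ to force the Iitaka map to have image of dimension at least $\dim Y + \kappa_\eta$, where $\kappa_\eta := \kappa(X_\eta, D|_{X_\eta})$. The inequality is vacuous if $\kappa_\eta = -\infty$, so assume $\kappa_\eta \geq 0$. Clearing denominators I may take $D$ and $H$ Cartier. Since $H$ is big, Kodaira's lemma furnishes $n\gg 0$ (divisible enough so that $nD$ and $nH$ are Cartier) for which $\Phi_{|nH|}: Y \dashrightarrow \mathbb{P}(H^0(Y,nH)^*)$ is birational onto its image, and simultaneously $\Phi_{|nD|_{X_\eta}|}$ has image of dimension exactly $\kappa_\eta$.

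The key construction is the bilinear multiplication
$$\mu: H^0(X, nD) \otimes_k H^0(Y, nH) \longrightarrow H^0(X, n(D + f^*H)), \qquad s\otimes t \mapsto s\cdot f^*t.$$
The image of $\mu$ spans a linear subsystem of $|n(D+f^*H)|$ whose associated rational map $\Psi$ factors, via the Segre embedding, through $\mathbb{P}(H^0(X,nD)^*) \times \mathbb{P}(H^0(Y,nH)^*)$, with second coordinate equal to $\Phi_{|nH|}\circ f$. The projection of $\Psi(X)$ onto the second factor is therefore $\Phi_{|nH|}(Y)$, of dimension $\dim Y$. Over a general closed point $y \in Y$, the fiber of this projection contains the image of $\Phi_{|nD|}|_{X_y}$; granting that this has dimension at least $\kappa_\eta$ for $y$ general, the fiber dimension theorem gives $\dim \Psi(X) \geq \dim Y + \kappa_\eta$, whence $\kappa(X, D+f^*H) \geq \dim Y + \kappa_\eta$.

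The main technical point, and the step I expect to be the real obstacle, is comparing the Iitaka map on a general closed fiber $X_y$ with that on the generic fiber $X_\eta$. I would handle this by shrinking $Y$ to a dense open $Y_0$ over which $f$ is flat (generic flatness) and on which $f_*\mathcal{O}_X(nD)$ is locally free of rank $h^0(X_\eta, nD|_{X_\eta})$ (after possibly enlarging $n$, using cohomology and base change). Then for general $y \in Y_0$ the restriction map $f_*\mathcal{O}_X(nD)\otimes k(y) \to H^0(X_y, nD|_{X_y})$ is an isomorphism, so the sections of $nD$ on $X$ restrict to a subsystem of $|nD|_{X_y}|$ that specializes $|nD|_{X_\eta}|$ and whose associated map on $X_y$ still has image of dimension $\geq \kappa_\eta$, completing the chain of inequalities.
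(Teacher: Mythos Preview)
The paper does not supply its own proof of this lemma; it cites \cite{BCZ} and \cite{Pa2}. Your overall architecture---produce a linear system on $X$ whose associated map separates the fibres of $f$ and, simultaneously, restricts on a general fibre to a map of dimension $\geq\kappa_\eta$---is exactly the standard one. However, the argument as written has a real gap at the step you flagged as ``the main technical point''.

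The problem is that you use the \emph{global} sections $H^0(X,nD)$ in the first factor of the Segre map, and you need the image of
\[
H^0(X,nD)=H^0(Y,f_*\mathcal{O}_X(nD))\longrightarrow H^0(X_\eta,nD|_{X_\eta})
\]
to be large enough that $\Phi_{|nD|}|_{X_\eta}$ has $\kappa_\eta$-dimensional image. Knowing that $f_*\mathcal{O}_X(nD)\otimes k(y)\cong H^0(X_y,nD|_{X_y})$ for general $y$ (cohomology and base change) says nothing about this: it identifies the \emph{fibre of the pushforward} with cohomology on the fibre, not the image of \emph{global} sections of the pushforward. Concretely, take $Y$ an elliptic curve, $L$ a non-torsion degree-$0$ line bundle, $X=\mathbb{P}(\mathcal{O}_Y\oplus L)$, and $D$ the section with $\mathcal{O}_X(D)\cong\mathcal{O}_X(1)\otimes f^*L^{-1}$. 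Then $D$ is effective, $\kappa_\eta=1$, but $f_*\mathcal{O}_X(nD)\cong\bigoplus_{j=0}^n L^{-j}$ and $H^0(X,nD)=k$ for all $n$; your map $\Phi_{|nD|}$ collapses every fibre to a point.

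The fix is standard: reduce (via Kodaira's lemma and effectivity of $D$) to $H=A$ ample, then for the chosen $n$ take $m\geq n$ large enough that $f_*\mathcal{O}_X(nD)\otimes\mathcal{O}_Y(mA)$ is globally generated (Serre). Work with the full linear system $|nD+mf^*A|$: global generation forces $H^0(X,nD+mf^*A)\twoheadrightarrow H^0(X_\eta,nD|_{X_\eta})$, so the restriction to $X_\eta$ dominates $\Phi_{|nD|_{X_\eta}|}$; and since $D\geq 0$ the system contains $nD+|mf^*A|$, which separates fibres. Finally $(m-n)D\geq 0$ gives $\kappa(nD+mf^*A)\leq\kappa(m(D+f^*A))=\kappa(D+f^*A)$. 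Your Segre/tensor-product packaging is not needed and is in fact what forces the bad factorisation through $H^0(X,nD)$.
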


\section{Proof of Theorem \ref{mthp}}\label{pf-positivity}
Let $Y_0$ be a smooth open subset of $Y$ such that $\mathrm{codim}_Y(Y\setminus Y_0) \geq 2$.

By Proposition \ref{stable-dim-II} we can assume $g$ is divisible enough that for every positive integer $e$, the sheaf $S_{\Delta}^{eg}f_*\mathcal{O}_X(D)$ has the stable rank
$\dim_{k(\bar{\eta})} S_{\Delta_{\bar{\eta}}}^0(X_{\bar{\eta}}, D_{\bar{\eta}})$. Then for every integer $e >0$, the composite homomorphism below is generically surjective
\begin{align*}
\alpha^{eg}: f_*\mathcal{O}_X ((1-p^{eg})(K_{X/Y})- &[(p^{eg}-1)\Delta] + p^{eg}D)|_{Y_0} \\
         &\twoheadrightarrow (S_{\Delta}^{eg}f_*\mathcal{O}_X(D))|_{Y_0} \hookrightarrow (F_Y^{(e-1)g*}S_{\Delta}^{g}f_*\mathcal{O}_X(D))|_{Y_0},
\end{align*}
because the two sheaves $S_{\Delta}^{eg}f_*\mathcal{O}_X(D)|_{Y_0}$ and $(F_Y^{(e-1)g*}S_{\Delta}^{g}f_*\mathcal{O}_X(D))|_{Y_0}$ have the same rank.

Let $H$ be an ample Cartier divisor on $Y$. Tensoring the map $\alpha^{eg}$ with $\mathcal{O}_Y(eH)$, we get a generically surjective homomorphism
\begin{align*}
\beta^{eg}: f_*\mathcal{O}_X ((1-p^{eg})(K_{X/Y})- &[(p^{eg}-1)\Delta] + p^{eg}D+ ef^*H)|_{Y_0} \\
&\rightarrow F_Y^{(e-1)g*}S_{\Delta}^{g}f_*\mathcal{O}_X(D)\otimes \mathcal{O}_Y(eH)|_{Y_0}.
\end{align*}

\begin{Claim}
There is an integer $e_0$ such that, for every integer $e > e_0$ the sheaf $f_*\mathcal{O}_X ((1-p^{eg})K_{X/Y}- [(p^{eg}-1)\Delta] + p^{eg}D + ef^*H)$ is generically globally generated.
\end{Claim}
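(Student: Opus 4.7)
The plan is to apply Lemma \ref{restr}. By the projection formula,
$$f_*\mathcal{O}_X(M_e) \cong f_*\mathcal{O}_X(M_e - e f^*H) \otimes \mathcal{O}_Y(eH),$$
and by cohomology and base change on a dense open $Y_0 \subset Y$ where $f_*\mathcal{O}_X(M_e)$ commutes with base change, the generic global generation of $f_*\mathcal{O}_X(M_e)$ is equivalent to the surjectivity, for general $y \in Y_0$, of the restriction map
$$H^0(X, \mathcal{O}_X(M_e)) \longrightarrow H^0(X_y, \mathcal{O}_{X_y}(M_e|_{X_y})).$$

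Setting $N := D - K_{X/Y} - \Delta$, which is nef and $f$-semi-ample by hypothesis, and exploiting that $(p^{eg}-1)\Delta$ is an integral divisor for sufficiently divisible $g$, one obtains the identity of Cartier divisors
$$M_e = \bigl(K_{X/Y} + \lfloor \Delta \rfloor\bigr) + \bigl(p^{eg}N + \{\Delta\} + e f^*H\bigr).$$
Fixing an ample Cartier divisor $H_X$ on $X$ and setting $\mathcal{F}_0 = \mathcal{O}_X(K_{X/Y} + \lfloor \Delta \rfloor)$ as a fixed coherent sheaf, the plan is to write $\mathcal{O}_X(M_e) \cong \mathcal{F}_0 \otimes H_X^n \otimes L_e$ with $n$ fixed above the threshold $N_0(\mathcal{F}_0, H_X)$ provided by Lemma \ref{restr} and with $L_e$ nef; then Lemma \ref{restr} delivers the required restriction surjection uniformly in $e$.

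The principal obstacle is the nefness of $L_e$. The naive attempt with $L_e = \mathcal{O}_X(p^{eg}N + \{\Delta\} + e f^*H - nH_X)$ carries a negative contribution $-nH_X$ from the ample slot, and in addition the effective fractional correction $\{\Delta\}$ may have negative intersection with curves $C \subset \mathrm{Supp}\{\Delta\}$ lying in fibers of $f$ on which $N$ is trivial. On any other curve, either the horizontal $e f^*H$ or the vertical $p^{eg} N$ term dominates for $e \gg 0$. The hypothesis that $N$ is $f$-semi-ample confines the $f$-vertical $N$-trivial locus to a proper closed subset, and a careful rebalancing --- replacing $\lfloor \Delta \rfloor$ by a mixed rounding of $\Delta$ along the relevant prime components and absorbing a finite correction into $\mathcal{F}_0$ --- yields a nef $L_e$ for $e$ sufficiently large. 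With this adjustment Lemma \ref{restr} applies, and the claim follows.
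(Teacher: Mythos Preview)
Your approach has a genuine gap: applying Lemma \ref{restr} directly on $X$ cannot succeed, because you cannot supply the required ample divisor. The only growing positive terms in $M_e$ are $p^{eg}N$ and $ef^*H$, but $N + f^*H$ is in general \emph{not} ample on $X$. Indeed, any curve $C$ contracted by the $f$-semi-ample morphism $h\colon X\to Z$ determined by $N$ lies in a fibre of $f$, so $f^*H\cdot C = 0$, and $N\cdot C = 0$ since $N\sim_{\mathbb Q} h^*A'$. On such a curve $L_e\cdot C = -nH_X\cdot C + (\text{bounded terms})$ stays strictly negative for every $e$. Your proposed ``mixed rounding'' of $\Delta$ cannot repair this: the obstruction is unrelated to $\mathrm{Supp}\,\Delta$, and such curves need not meet $\Delta$ at all. (A secondary issue: in the normal setting of Notation~\ref{3} one only assumes $p\nmid\mathrm{ind}(K_{X/Y}+\Delta)_\eta$, so $(p^{eg}-1)\Delta$ need not be integral globally and your displayed identity for $M_e$ already fails as Cartier divisors.)

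The paper's proof exploits $f$-semi-ampleness more substantially. Factor $f = g\circ h\colon X\to Z\to Y$ with $N\sim_{\mathbb Q} h^*A'$ and $A'$ a nef, $g$-ample $\mathbb Q$-divisor on $Z$; pick $d$ with $A=dA'$ Cartier and write $p^{eg}-1 = q_e d + r_e$, $0\le r_e<d$. Projection formula yields
\[
f_*\mathcal O_X(M_e)\;\cong\; g_*\Bigl(\mathcal O_Z(q_eA + eg^*H)\otimes h_*\mathcal O_X\bigl((r_e{+}1)D - r_eK_{X/Y} - [r_e\Delta]\bigr)\Bigr).
\]
Two things are gained by passing to $Z$: first, $A+g^*H$ \emph{is} ample on $Z$ (since $A$ is $g$-ample and $H$ is ample on $Y$), so the second form of Lemma \ref{restr} applies to $g\colon Z\to Y$; second, the coherent sheaves $h_*\mathcal O_X((r_e{+}1)D - r_eK_{X/Y} - [r_e\Delta])$ range over a \emph{finite} set as $r_e\in\{0,\dots,d-1\}$, so a single threshold works for all $e$. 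This factorisation through $Z$ is the missing idea.
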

\begin{proof}[Proof of the claim]
Since $D -K_{X/Y} - \Delta$ is $f$-semi-ample, we have two morphisms $h: X \rightarrow Z$ and $g: Z \rightarrow Y$, such that $D -K_{X/Y} - \Delta \sim_{\mathbb{Q}} h^*A'$ where $A'$ is a $g$-ample $\mathbb{Q}$-Cartier divisor on $Z$, which is also nef by the assumption. Take an integer $d >0$ such that $A = dA' \sim d(D -K_{X/Y} - \Delta)$ is Cartier. Write that $p^{eg} -1 = q_e d + r_e $ where $q_e$ and $r_e$ are integers such that $0 \leq r_e <d$. Then by $f_* = g_*\circ h_*$, we have
\begin{align*}
&f_*\mathcal{O}_X ((1-p^{eg})K_{X/Y}- [(p^{eg}-1)\Delta]+ p^{eg}D + ef^*H)\\
&\cong f_*\mathcal{O}_X (q_e d (D - K_{X/Y} - \Delta) + ef^*H  + (r_e+1)D - r_eK_{X/Y} - [r_e\Delta])\\
& \cong g_* h_* \mathcal{O}_X (h^*q_eA + eh^*g^*H  + (r_e+1)D - r_eK_{X/Y} - [r_e\Delta])\\
&\cong g_*(\mathcal{O}_Z(q_eA + eg^*H)\otimes h_*\mathcal{O}_X ((r_e+1)D - r_eK_{X/Y} - [ r_e\Delta]))
\end{align*}
where the last ``$\cong$'' is from using projection formula. Note that the set
$$\{h_*\mathcal{O}_X ((r_e+1)D - r_eK_{X/Y} - [r_e\Delta])| e = 0,1,2,\cdots \}$$
contains finitely many coherent sheaves.
Since $A + g^*H$ is ample, and both $A$ and $g^*H$ are nef, by Lemma \ref{restr} there exist a positive integer $e_0$ and a non-empty Zariski open subset $Y'_0 \subseteq Y$ such that for every $e>e_0$ and $y \in Y'_0$, the restriction map
\begin{align*}
&H^0(Y, f_*\mathcal{O}_X ((1-p^{eg})(K_{X/Y})- [(p^{eg}-1)\Delta]+ p^{eg}D + ef^*H)) \\
&\cong H^0(Z, \mathcal{O}_Z(q_eA + eg^*H)\otimes h_*\mathcal{O}_X ((r_e+1)D - r_eK_{X/Y} - [ r_e\Delta])) \\
&\xrightarrow{}  H^0(Z_y, \mathcal{O}_Z(q_eA + eg^*H)\otimes h_*\mathcal{O}_X ((r_e+1)D - r_eK_{X/Y} - [ r_e\Delta])
\otimes\mathcal{O}_{Z_y}) \\
&\cong g_*(\mathcal{O}_Z(q_eA + eg^*H)\otimes h_*\mathcal{O}_X ((r_e+1)D - r_eK_{X/Y} - [ r_e\Delta]))\otimes k(y) \\
& \cong f_*\mathcal{O}_X ((1-p^{eg})(K_{X/Y})- [(p^{eg}-1)\Delta] + p^{eg}D + ef^*H)\otimes k(y)
\end{align*}
is surjective.
\end{proof}
The claim above implies that the image of $\beta^{eg}$ is generically globally generated, hence so is the sheaf $F_Y^{(e-1)g*}S_{\Delta}^{g}f_*\mathcal{O}_X(D)\otimes \mathcal{O}_Y(eH)|_{Y_0}$. Therefore, by Remark \ref{t-non-ngtv} we have $t(Y_0, S_{\Delta}^{g}f_*\mathcal{O}_X(D)|_{Y_0}, H) \geq 0$, which implies that $S_{\Delta}^{g}f_*\mathcal{O}_X(D)$ is weakly positive by Lemma \ref{cri-for-positivity}.

If $Y$ is smooth, then setting $Y_0 = Y$, by the argument above we show that $t(Y, S_{\Delta}^{g}f_*\mathcal{O}_X(D), H) \geq 0$.

\section{Subadditivity of Kodaira dimensions}\label{pf-of-subadd}
In this section, we will prove Theorem \ref{mthk}. Let's begin with a theorem with similar spirit of \cite[Lemma 4.4]{Pa2}.
\begin{Theorem}\label{F-p-subadd-of-kod-dim}
Let $f: X \rightarrow Y$ be a separable fibration between normal projective varieties over an algebraically closed field $k$ with $\mathrm{char}~k = p>0$, and let $D$ be a Cartier divisor on $X$. Assume that for some positive integer $e$,  the sheaf $F_Y^{e*} f_*\mathcal{O}_X(D)$ contains a non-zero subsheaf $\mathcal{F}$ such that $t(Y, \mathcal{F}) \geq 0$. Then for any big $\mathbb{Q}$-Cartier divisor $H$ on $Y$, we have
$$\kappa(D + H) \geq \kappa(X_{\eta}, D|_{X_{\eta}}) + \dim Y.$$
\end{Theorem}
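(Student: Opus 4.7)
The plan is to use the FWP hypothesis to show that $D$ is approximately effective on $X$ --- more precisely, that $D + \epsilon f^*A$ is $\mathbb{Q}$-linearly equivalent to an effective $\mathbb{Q}$-divisor for every $\epsilon>0$ and every ample $\mathbb{Q}$-Cartier divisor $A$ on $Y$ --- and then to conclude via the easy subadditivity Lemma~\ref{l-adtv-of-kdim}.

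To establish the approximate effectivity, fix the given non-zero FWP sub-sheaf $\mathcal{F} \subset F_Y^{e_0*}f_*\mathcal{O}_X(D)$ and an ample Cartier divisor $A$ on $Y$. By the definition of FWP applied to $A$, for every $\epsilon>0$ one can find $e'\geq 1$ and a non-negative integer $n = n_{e'}$ with $n/p^{e_0+e'}<\epsilon$ such that $F_Y^{e'*}\mathcal{F}\otimes\mathcal{O}_Y(nA)$ is generically globally generated; since $\mathcal{F}$ has positive generic rank, this sheaf admits a non-zero global section. Set $E = e_0+e'$. Applying $F_Y^{e'*}$ to the inclusion $\mathcal{F}\hookrightarrow F_Y^{e_0*}f_*\mathcal{O}_X(D)$ gives $F_Y^{e'*}\mathcal{F}\hookrightarrow F_Y^{E*}f_*\mathcal{O}_X(D)$, and over the regular locus $Y_0\subset Y$ (whose complement has codimension $\geq 2$) flat base change yields $F_Y^{E*}f_*\mathcal{O}_X(D)|_{Y_0}\cong f_{E*}\mathcal{O}_{X_{Y^E}}(\pi_Y^{E*}D)|_{Y_0}$. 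The chosen section thus produces a non-zero global section of the line bundle $\pi_Y^{E*}D + nf_E^*A$ on $X_{Y^E}$. Pulling this section back through the purely inseparable relative Frobenius $F_{X/Y}^E\colon X\to X_{Y^E}$, which is injective on sections of line bundles, and using the identities $F_{X/Y}^{E*}\pi_Y^{E*}D = F_X^{E*}D = p^E D$ and $F_{X/Y}^{E*}f_E^*A = f^*A$, yields a non-zero section of $p^E D + nf^*A$ on $X$. Dividing by $p^E$, this shows that $D + (n/p^E)f^*A$ is $\mathbb{Q}$-linearly equivalent to an effective $\mathbb{Q}$-Cartier divisor, with $n/p^E < \epsilon$.

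Given the big $\mathbb{Q}$-Cartier divisor $H$ on $Y$, pick $\epsilon>0$ small enough that $H - \epsilon A$ is still big (possible because bigness is an open condition in $N^1_{\mathbb{Q}}(Y)$) and let $E_\epsilon\geq 0$ be an effective $\mathbb{Q}$-Cartier divisor with $E_\epsilon\sim_{\mathbb{Q}} D + \epsilon f^*A$ from the previous step. Then
\[
D + f^*H \sim_{\mathbb{Q}} E_\epsilon + f^*(H - \epsilon A),
\]
and applying Lemma~\ref{l-adtv-of-kdim} to the effective divisor $E_\epsilon$ on $X$ and the big divisor $H-\epsilon A$ on $Y$ gives
\[
\kappa(D + f^*H) \geq \kappa(X_\eta, E_\epsilon|_{X_\eta}) + \dim Y = \kappa(X_\eta, D|_{X_\eta}) + \dim Y,
\]
where the last equality uses $f^*A|_{X_\eta}=0$, so $E_\epsilon|_{X_\eta} \sim_{\mathbb{Q}} D|_{X_\eta}$.

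The main technical obstacle is the flat base change step, which strictly speaking requires $F_Y^E$ to be flat and hence $Y$ to be regular. Since $Y$ is only assumed normal, one has flatness only over the open regular locus $Y_0\subset Y$; the cleanest remedies are either to use reflexivity of line bundles to extend the required sections across the preimage of $Y\setminus Y_0$, or to base change to a smooth modification $Y'\to Y$ (as in Proposition~\ref{compds}) and run the same argument there, since Kodaira dimension is a birational invariant and only the existence of a single non-zero section is required.
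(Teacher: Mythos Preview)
Your argument is correct and shares the paper's skeleton---use the FWP hypothesis to produce an effective divisor $\mathbb{Q}$-linearly equivalent to $D$ plus a small ample pullback from $Y$, then invoke Lemma~\ref{l-adtv-of-kdim}---but the route differs in one substantive respect. The paper passes to the normalization $X'$ of the Frobenius base change $X_{Y^{e+g}}$, finds the effective divisor $D'\sim\sigma^*D+n_gf'^*A$ on $X'$, and then uses the Covering Theorem~\ref{cth} for $\sigma\colon X'\to X$ to transport the Kodaira-dimension estimate back to $X$. You instead pull the section back along the relative Frobenius $F_{X/Y}^E\colon X\to X_{Y^E}$ (using $F_{X/Y}^{E*}\pi_Y^{E*}D\sim p^ED$ and $F_{X/Y}^{E*}f_E^*A=f^*A$), landing directly on $X$ and bypassing the Covering Theorem altogether. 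Your route is a bit more elementary; the paper's has the mild advantage that $X'$ is integral by construction, whereas you implicitly rely on separability of $f$ to know $X_{Y^E}$ is integral at its generic point, so that a generically non-zero section stays non-zero under pullback. The paper's device of choosing $A$ ample with $H\geq 2A$ and your use of openness of bigness to absorb the $\epsilon f^*A$ term are interchangeable.

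Your worry about restricting to the regular locus $Y_0$ is in fact unnecessary: the base-change morphism $F_Y^{E*}f_*\mathcal{O}_X(D)\to f_{E*}\mathcal{O}_{X_{Y^E}}(\pi_Y^{E*}D)$ exists over all of $Y$ (it is adjoint to $\pi_Y^{E*}$ applied to the evaluation $f^*f_*\mathcal{O}_X(D)\to\mathcal{O}_X(D)$); flatness of $F_Y^E$ is only needed for it to be an \emph{isomorphism}. Since the section of $F_Y^{e'*}\mathcal{F}\otimes\mathcal{O}_Y(nA)$ you start with is non-zero at the generic point of $Y$ and every map in the chain is generically injective, its image gives a genuinely global non-zero section of $\pi_Y^{E*}D+nf_E^*A$ on $X_{Y^E}$, hence of $p^ED+nf^*A$ on $X$, with no extension step required. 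One small slip: your $E_\epsilon$ satisfies $E_\epsilon\sim_{\mathbb{Q}}D+(n/p^E)f^*A$ with $n/p^E<\epsilon$, not $=\epsilon$; this only helps, as $H-(n/p^E)A$ is then still big.
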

\begin{proof}
Let $A$ be an ample $\mathbb{Q}$-Cartier divisor on $Y$ such that $H \geq 2A$.
By Remark \ref{t-non-ngtv}, we can find positive integers $g, n_g \ll p^g$ such that, the sheaf $F_{Y}^{g*}\mathcal{F} \otimes \mathcal{O}_Y(n_gA)$ is generically globally generated.

Consider the following commutative diagram \\
\[\xymatrix@C=2cm{&X'\ar@/^1.5pc/[rr]|{\sigma}\ar[r]^{\sigma'}\ar[dr]^{f'} &X_{Y^{e+g}}\ar[r]^{\pi_{Y}^{e+g}}\ar[d]^{f_{e+g}}     &X\ar[d]^f\\
&      &Y^{e+g}\ar[r]^{F_{Y}^{e+ g}}     &Y\\
} \]
where $X'$ denotes the normalization of $X_{Y^{e+g}}$ and $\sigma, \sigma', f'$ denote the natural morphisms.

By the commutative diagram above, there are natural inclusions
$$F_{Y}^{(e+ g)*}f_*\mathcal{O}_X(D) \hookrightarrow f_{(e+g)*}\pi_{Y}^{e+g*} \mathcal{O}_X(D) \hookrightarrow f'_*\mathcal{O}_{X'}(\sigma'^* \pi_{Y}^{(e+g)*}D) = f'_*\mathcal{O}_{X'}(\sigma^*D).$$
Therefore, the sheaf $f'_*\mathcal{O}_{X'}(\sigma^*D)\otimes \mathcal{O}_Y(n_gA)$ contains a generically globally generated subsheaf $F_{Y}^{g*}\mathcal{F} \otimes \mathcal{O}_Y(n_gA)$. We can find an effective divisor $D'$ on $X'$ such that
$$D' \sim \sigma^*D + n_gf'^*A.$$
By $F_{Y}^{(e+g)*}H = p^{e+g}H > 2n_g A$, we complete the proof by
\begin{align*}
\kappa(X, D+ f^*H)  &=  \kappa(X', \sigma^*D + f'^*F_{Y}^{(e+g)*}H) \cdots \text{by Theorem \ref{cth}} \\
&\geq \kappa(X', \sigma^*D + 2n_g f'^*A) \\
  & =  \kappa(X', D' + n_g f'^*A) \\
  & \geq \kappa(X'_{\eta^{e+g}}, D'|_{X'_{\eta^{e+g}}}) + \dim Y  \cdots \text{by Lemma \ref{l-adtv-of-kdim}}\\
  & \geq \kappa(X_{\eta}, D|_{X_{\eta}}) + \dim Y \cdots \text{since $D'|_{X'_{\eta^{e+g}}} = \sigma^*(D|_{X_{\eta}})$}.
\end{align*}

\end{proof}

Before proving Theorem \ref{mthk}, let's explain the strategy. If the base $Y$ is smooth and $A$ is Cartier (good situation), first by Theorem \ref{mthp} we can show that some Frobenius pullback of $f_*\mathcal{O}_X(D-f^*A)$ contains a nonzero subsheaf $\mathcal{F}$ with $t(Y, \mathcal{F}) \geq 0$, then by $D = (D- f^*A) + f^*A$, applying Theorem \ref{F-p-subadd-of-kod-dim} we finish the proof. To reduce to a fibration with the base being smooth, we will do a smooth alteration base change (\cite{J}), namely, a proper, surjective and generically finite morphism; and to reduce to the situation $A$ being Cartier, we will do some Frobenius base changes and replace the pullback of $A$ with a big Cartier divisor.

\begin{proof}[Proof of Theorem \ref{mthk}] We break the proof into three steps following the above strategy.

\textbf{Step 1:} We reduce to the good situation.

Consider the following commutative diagram
\begin{equation*}
\begin{gathered}
\xymatrix@C=1.5cm{&X_2\ar@/^1.6pc/[rrr]|{\sigma}\ar[rd]^{f_2}\ar[r]^>>>>>>>{\sigma_2} &\bar{X}_2=X\times_Y Y_2\ar[rr]^{\sigma_1}\ar[d]^{\bar{f}_2} & &X\ar[d]^{f}\\
&     &Y_2 = Y_1^{g_0}\ar[r]^{F_{Y_1}^{g_0}} &Y_1\ar[r]^{\mu} &Y\\
}
\end{gathered}
\end{equation*}
where
\begin{itemize}
\item
if $Y$ is smooth then we set $Y_1 = Y$ and $\mu = \mathrm{id}_Y$; otherwise, $f: X \to Y$ is flat by the assumption, we set $\mu: Y_1\rightarrow Y$ to be a smooth alteration (\cite{J}), thus by the construction the base change $\bar{X}_2=X\times_Y Y_2$ is always integral;
\item
$\sigma_2: X_2 \rightarrow \bar{X}_2$ is the normalization morphism;
\item
$\sigma, \sigma_1, f_2$ and $\bar{f}_2$ denote the natural morphisms.
\end{itemize}
We can assume $g_0$ is big enough such that, the geometric generic fiber $(X_2)_{\bar{\eta}}$ is normal, and that the integral part $A_2 = [p^{g_0}\mu^*A]$ is big. Then $B = p^{g_0}\mu^*A - A_2$ is effective, and on $Y_2$ we have $F_{Y_1}^{g_0*}\mu^*A = A_2 + B$.

\medskip

We claim that there exist an effective $\mathbb{Q}$-Weil divisor $\Delta'$ and an effective $\sigma$-exceptional Cartier divisor $E_2$ on $X_2$ such that
$$K_{X_2/Y_2} + \Delta' = \sigma^*(K_{X/Y} + \Delta) + E_2~\mathrm{and}~E_2|_{(X_2)_{\bar{\eta}}} = 0.$$
Indeed, if $Y$ is smooth then the base change $Y_2 \to Y$ is flat and thus $K_{\bar{X}_2/Y_2} = \sigma_1^*K_{X/Y}$, we can set $E_2 = 0$ and construct $\Delta'$ by applying Proposition \ref{F-non-vanishing} (4); otherwise, since $f: X\to Y$ is flat, we can apply Proposition \ref{compds} to get the divisors $\Delta'$ and $E_2$ on $X_2$ as required.

\medskip

Let
$$\Delta_2 = \Delta' + f_2^*B ~\mathrm{and}~ D_2 = \sigma^*D + E_2.$$
Immediately it follows that $K_{X_2/Y_2} + \Delta_2 = \sigma^*(K_{X/Y} + \Delta) + E_2 + f_2^*B$ and
$$(K_{X_2/Y_2} + \Delta_2)_{\bar{\eta}} = (\sigma^*(K_{X/Y} + \Delta))_{\bar{\eta}}~\mathrm{and}~(D_2)_{\bar{\eta}} = (\sigma^*D)_{\bar{\eta}}.$$
Therefore,

(i) the divisor $(D_2 - f_2^*A_2) - K_{X_2/Y_2} - \Delta_2  \sim_{\mathbb{Q}} \sigma^*(D- (K_{X/Y} + \Delta) - f^*A)$
is nef and $f_2$-semi-ample;

(ii) $p\nmid \mathrm{ind}(K_{X_2/Y_2} + \Delta_2)_{\bar{\eta}}$ by the assumption (1) in the theorem; and

(iii) applying Proposition \ref{F-non-vanishing} (4) shows $S^0_{(\Delta_2)_{\bar{\eta}}}((X_2)_{\bar{\eta}}, (D_2)_{\bar{\eta}}) \neq 0$ by the assumption (3).

\medskip

\textbf{Step 2:}
Applying Theorem \ref{mthp} on the pair $(X_2, \Delta_2)$, wee can show that for sufficiently divisible $e$, the sheaf $F_{Y_2}^{e*}f_{2*}\mathcal{O}_{X_2}(D_2- f_2^*A_2)$ contains a nonzero subsheaf
$S^e_{\Delta_2}f_{2*}\mathcal{O}_{X_2}(D_2- f_2^*A_2)$ satisfying $t(Y, S^e_{\Delta_2}f_{2*}\mathcal{O}_{X_2}(D_2- f_2^*A_2)) \geq 0$.
Then we conclude that
\begin{align*}
\kappa(X, D) &  =  \kappa(X_2, D_2 = \sigma^*D + E) ~~~~\text{$\cdot\cdot\cdot$ by Theorem \ref{cth}}\\
&   =     \kappa(X_2, (D_2 - f_2^*A_2) + f_2^*A_2 ) \\
&   \geq  \dim Y + \kappa((X_2)_{\eta_2}, (D_2)_{\eta_2}) ~~~~\text{$\cdot\cdot\cdot$ by Theorem \ref{F-p-subadd-of-kod-dim}}\\
&  \geq  \dim Y + \kappa(X_{\bar{\eta}}, D_{\bar{\eta}}) ~~~~\text{$\cdot\cdot\cdot$ since $(D_2)_{\bar{\eta}} = (\sigma^*D)_{\bar{\eta}}$}\\
\end{align*}

\medskip

\textbf{Step 3:} We are left to prove that $D$ is big under the conditions (1), (2') and that $D$ is nef and $f$-big.

Take an ample divisor $A_1$ on $Y$. Then $D+ f^*A_1$ is big. We can write that
$$D+ f^*A_1 \sim_{\mathbb{Q}} H_1 + B_1$$
where $H_1$ is ample and $B_1$ is an effective $\mathbb{Q}$-Cartier divisor with $p \nshortmid \mathrm{ind}(B_1)$.
Take a rational number $\delta > 0$ small enough such that

(i) $A'= A - \delta A_1$ is big on $Y$; and

(ii)  $p \nshortmid \mathrm{ind}(\delta B_1)$.\\
Let $\Delta' = \Delta + \delta B_1$. Then for sufficiently divisible integer $m >0$, since $D$ is nef and $f$-big we have

(a) $mD - (K_{X/Y} + \Delta') - f^*A' = (m - 1- \delta)D + \delta(D + f^*A_1 - B_1) + (D - (K_{X/Y} + \Delta) - f^*A)$ is ample by the condition (2'); and

(b) $S^0_{\Delta'_{\bar{\eta}}}(X_{\bar{\eta}}, mD_{\bar{\eta}}) \neq 0$ by Proposition \ref{F-non-vanishing} (3).

Finally applying the result in \textbf{Step 2} on the pair $(X, \Delta')$, we show that $mD$ is big.
\end{proof}

\section{Application to three-folds}\label{app-to-3fold}
In this section we will focus on three dimensional varieties in characteristic $p>5$. Taking advantages of minimal model program and smooth resolution of singularities, both Corollary \ref{app-to-3dim} and \ref{app-to-3dim-special} follow easily from Theorem \ref{mthk}.

\subsection{Proof of Corollary  \ref{app-to-3dim}}
We first pass to a fibration over $Z$. Take a smooth resolution $\sigma: W \to X$, and assume the morphism $W \to Y$ lifts to a fibration $g: W \to Z$, which fit into the following commutative diagram
$$\xymatrix{&W\ar[d]^{g}\ar[r]^{\sigma} &X \ar[d]^{f}\\
& Z \ar[r]^{\mu}    &Y\\
}$$
Then $\sigma^*(K_X + \Delta)$ is nef and $g$-big, thus $n\sigma^*(K_X + \Delta) + K_W$ is $g$-big for sufficiently big $n$, and
$$(n\sigma^*(K_X + \Delta) + K_W) - K_{W/Z} - f^*K_Z = n\sigma^*(K_X + \Delta)$$
is nef.
By Proposition \ref{F-non-vanishing} (3), for sufficiently divisible $n>0$
$$S^{0}(W_{\bar{\eta}}, (n\sigma^*(K_X + \Delta) + K_W)_{\bar{\eta}}) \neq 0.$$
Since $Z$ is smooth and $K_Z$ is big, applying Theorem \ref{mthk} shows that $n\sigma^*(K_X + \Delta) + K_W$ is big.

There exist an effective divisor $D$ and an effective $\sigma$-exceptional divisor $E$ on $W$ such that $K_W \sim_{\mathbb{Q}} \sigma^*(K_X + \Delta) - D + E$. Applying Theorem \ref{cth}, we can show
$$\kappa(X, K_X + \Delta) = \kappa(W, \sigma^*(n+1)(K_X + \Delta) + E) \geq \kappa(W, \sigma^*n(K_X + \Delta) + K_W) = 3.$$

\subsection{Proof of Corollary  \ref{app-to-3dim-special}}\label{pf-3dim}
Before giving the proof, we remark some easy results. Let $\rho: X' \to X$ be a smooth log resolution of $(X,\Delta)$. We can write that
$\small{K_{X'} + \rho_*^{-1}\Delta + \sum_ia_iE_i = \rho^*(K_X + \Delta) + \sum_j b_jF_j}$
where $E_i, F_j$ are distinct reduced and irreducible exceptional components and $0< a_i <1, b_j \geq 0$. Let $\Delta' = \rho_*^{-1}\Delta + \sum_ia_iE_i$. Then $(X', \Delta')$ is klt, and by Theorem \ref{cth} we conclude
$$\kappa(X', K_{X'} + \Delta') = \kappa(X, K_X + \Delta)~\mathrm{and}~\kappa(X', K_{X'}) \leq \kappa(X, K_X).$$
Moreover we have $\kappa(X'_{\bar{\eta}}, (K_{X'} + \Delta')_{\bar{\eta}}) \geq \kappa(X_{\bar{\eta}}, (K_X + \Delta)_{\bar{\eta}})$, and
in case (2) $\kappa(X'_{\bar{\eta}}, K_{X'_{\bar{\eta}}}) =\kappa(X_{\bar{\eta}}, K_{X_{\bar{\eta}}})$ since $X_{\bar{\eta}}$ is assumed smooth.
So to prove the inequality of this corollary, we are allowed to replace $(X,\Delta)$ with $(X', \Delta')$ in case (1) and replace $X$ with $X'$ in case (2).

Let $\sigma: (X, \Delta) \dashrightarrow(\bar{X}, \bar{\Delta})$ be the map to a minimal model of $(X, \Delta)$. If necessary, by replacing $(X, \Delta)$ with a smooth log resolution as above, we can assume $\sigma$ is a morphism. Since $Y$ is non-uniruled, applying Theorem \ref{rel-mmp} (3.4) and (4), we have the following commutative diagram
$$\xymatrix{&(X,\Delta)\ar[d]^{f}\ar[r]^{\sigma} &(\bar{X}, \bar{\Delta}) \ar@{-->}[dl]^{\bar{f}}\\
& Y     &\\
}$$
here if $\dim Y =1$ then $\bar{f}: \bar{X} \dashrightarrow Y$ is a morphism, and if $\dim Y =2$ then there exists a nonempty open subset $U \subseteq Y$ such that $\bar{X}_U \cong X_U$, thus the restriction map $\bar{f}: \bar{X}_U \to U$ is a morphism.

In case (1), by the construction, applying Theorem \ref{rel-mmp} (4) shows that $\sigma^*(K_{\bar{X}} + \bar{\Delta})$ is nef and $f$-big. For sufficiently divisible $n>0$ the divisor
$$(n\sigma^*(K_{\bar{X}} + \bar{\Delta}) + K_X) - K_{X/Y} - f^*K_Y = n\sigma^*(K_X + \Delta)$$
is nef and $f$-big, and by Proposition \ref{F-non-vanishing} (3)
$$S^{0}(X_{\bar{\eta}}, (n\sigma^*(K_{\bar{X}} + \bar{\Delta}) + K_X)_{\bar{\eta}}) \neq 0.$$
Since $K_Y$ is big, applying Theorem \ref{mthk} shows that $n\sigma^*(K_{\bar{X}} + \bar{\Delta}) + K_X$ is big.
Then arguing as in the last paragraph of the proof of Corollary \ref{app-to-3dim}, we can show that $K_X + \Delta$ is big.

In case (2), it is assumed $\Delta = 0$. Then since $WC_{3,2}$ has been proved in \cite{CZ}, we can assume $Y$ is a curve with $g(Y) >1$, and have a fibration $\bar{f}: \bar{X} \rightarrow Y$. The case $\kappa(X_{\bar{\eta}}) = 2$ is included in Case (1).
We only need to consider the cases $\kappa(X_{\bar{\eta}}) = 0$ or $1$.

If $\kappa(X_{\bar{\eta}}) = 0$, then $K_{\bar{X}/Y}$ is  relatively semi-ample over $Y$ by Theorem \ref{rel-mmp} (3.3). Notice that general fibers of $\bar{f}$ have canonical singularities, which are strongly $F$-regular by \cite{Har98} because $\mathrm{char}~k >5$. Applying \cite[Theorem 3.16]{Pa}, we have that $K_{\bar{X}/Y}$ is nef, so there exists a nef $\mathbb{Q}$-Cartier $\mathbb{Q}$-divisor $M$ on $Y$ such that
$$K_{\bar{X}/Y} \sim_{\mathbb{Q}} \bar{f}^*M.$$
It is easy to conclude that
$$\kappa(X) = \kappa(\bar{X}, K_{\bar{X}}) = \kappa(\bar{X}, K_{\bar{X}/Y} + \bar{f}^*K_Y) = \kappa(Y, K_Y + M) = 1 = \dim Y.$$

If $\kappa(X_{\bar{\eta}}) = 1$, then $X_{\bar{\eta}}$ is a smooth surface over $k(\bar{\eta})$, and general fibers of its Iitaka fibration are smooth elliptic curves.
Considering the relative Iitaka fibration of $X$, if necessary, by blowing up $X$, we can assume $f:X\to Y$ factors through an elliptic fibration $h: X\to Z$ to a smooth surface $Z$, which fit into the following commutative diagram
$$\xymatrix{&X \ar[r]^{\sigma}\ar[d]^h &\bar{X} \ar[d]^{\bar{f}}\\
&Z\ar[r]^{g}   &Y\\
}$$
Applying Lemma \ref{l-linear-pullback}, if necessary, again by blowing up both $X$ and $Z$, we can also assume $\sigma^*K_{\bar{X}} \sim_{\mathbb{Q}} h^*H$ for some nef and $g$-big $\mathbb{Q}$-Cartier $\mathbb{Q}$-divisor $H$ on $Z$.
By Proposition \ref{F-non-vanishing} (3), for sufficiently divisible $n >0$, $S^0(Z_{\bar{\eta}}, (nH +K_Z)_{\bar{\eta}}) \neq 0$. Combining that $(nH +K_Z) - K_{Z/Y} - g^*K_Y$ is nef and $g$-big and that $K_Y$ is big, by Theorem \ref{mthk} we can show $nH +K_Z$ is big.
Since $\kappa(X, K_{X/Z}) \geq 0$ (\cite[3.2]{CZ}), there exists an effective $\mathbb{Q}$-divisor $E$ on $X$ such that $K_{X/Z} \sim_{\mathbb{Q}} E$. Applying Theorem \ref{cth} it follows that
\begin{align*}
\kappa(X, K_X) &= \kappa(X, (n+1)K_X) =  \kappa(X, n\sigma^*K_{\bar{X}} + K_X)\\
               &= \kappa(X, n\sigma^*K_{\bar{X}} + E + h^*K_Z) \geq \kappa(Z, nH + K_Z) = 2.
\end{align*}
This completes the proof.

\subsection{Remarks on the proof}\label{can-bdl-formula}
Our strategy to prove $C_{n,m}$ heavily relies on the non-vanishing of $S^{0}(X_{\bar{\eta}}, lK_{X_{\bar{\eta}}})$. However, this often fails, for example when $X_{\bar{\eta}}$ is a supersingular elliptic curve. We have known that $S^{0}(X_{\bar{\eta}}, lK_{X_{\bar{\eta}}}) \neq 0$ if $K_{X_{\bar{\eta}}}$ is big and $l\gg0$.  To overcome this difficulty, an idea is to consider the relative Iitaka fibration $h: X \rightarrow Z$ as in the proof of Corollary  \ref{app-to-3dim-special}, then reduce to studying $\kappa(Z, K_Z + \Delta_Z)$ where $K_Z + \Delta_Z$ is relatively big over $Y$. To carry out this idea, we only need to have

``Canonical bundle formula'': for a fibration $h:(X, \Delta) \rightarrow Z$ from a klt pair such that $K_{X} + \Delta$ is relatively $\mathbb{Q}$-trivial over $Z$,
then there exists an effective divisor $\Delta_Z$ on $Z$ such that $K_{X} + \Delta \sim_{\mathbb{Q}} h^*(K_{Z}+  \Delta_Z)$.

Over complex numbers this is true, $(Z, \Delta_Z)$ can even be assumed to be klt up to some birational modifications, more precisely $\Delta = B + M$ is the sum of discriminant part and moduli part (cf. \cite[Theorem 4.5]{FM} and \cite[Theorem 0.2]{Amb}). In positive characteristic, we have such a canonical bundle  formula when the geometric generic fiber $(X_{\bar{\xi}}, \Delta_{\bar{\xi}})$ of $h$ is globally $F$-split (cf. \cite[Theorem 3.18]{Ej} or \cite[Theorem B]{DS}), or when $\Delta=0$ and $X_{\bar{\xi}}$ is a smooth elliptic curve. In general, the canonical bundle formula as above does not hold in positive characteristic, one can construct a counter example by a ruled surface over a curve with a multiple section purely inseparable over the base. Recently Witaszek \cite{Wit17} proves a weaker canonical bundle formula for fibrations of relative dimension one and gives some interesting applications. But his formulation does not seem to fit the above strategy.

\end{document}